\documentclass[a4paper,11pt]{amsart}
\usepackage{amsmath,amssymb}
\usepackage{microtype}
\usepackage{needspace}
\usepackage[textwidth=14.5cm,textheight=21.5cm,centering]{geometry}
\usepackage{cite}
\usepackage{xcolor}
\usepackage[colorlinks=true,linkcolor=blue,citecolor=blue,urlcolor=blue]{hyperref}

\allowdisplaybreaks[4]

\newtheorem{theorem}{Theorem}[section]
\newtheorem{lemma}[theorem]{Lemma}
\newtheorem{remark}[theorem]{Remark}
\newtheorem{proposition}[theorem]{Proposition}

\numberwithin{equation}{section}
\DeclareMathOperator{\tr}{tr}
\DeclareMathOperator{\Vol}{Vol}
\newcommand{\sphere}{\mathbb S}

\title[The Second Gap in Chern's Conjecture with Constant Cubic Trace]{The Second Gap and Rigidity in Chern's Conjecture with Constant Cubic Trace}

\author[J. Q. Ge]{Jianquan Ge}
\address{School of Mathematical Sciences, Laboratory of Mathematics and Complex Systems, Beijing Normal University, Beijing 100875, P. R.
CHINA.}
\email{jqge@bnu.edu.cn}

\author[H. X. Tan]{Huixin Tan\(^{*}\)}
\address{\(^{*}\)School of Mathematical Sciences, Laboratory of Mathematics and Complex Systems, Beijing Normal University, Beijing 100875, P. R.
CHINA.}
\email{hxtan@mail.bnu.edu.cn}

\author[W. J. Yan]{Wenjiao Yan}
\address{School of Mathematical Sciences, Laboratory of Mathematics and Complex Systems, Beijing Normal University, Beijing 100875, P. R.
CHINA.}
\email{wjyan@bnu.edu.cn}

\author[Y. H. Zhang]{Yunheng Zhang}
\address{School of Mathematical Sciences, Laboratory of Mathematics and Complex Systems, Beijing Normal University, Beijing 100875, P. R.
CHINA.}
\email{yunheng@mail.bnu.edu.cn}

\thanks{\(^{*}\) Corresponding author.}
\thanks{The project is partially supported by NSFC (No. 12271038, 12571049, 12526205) and the Fundamental Research Funds for the Central Universities.}

\subjclass[2020]{53C42, 53C40}
\keywords{Chern conjecture, minimal hypersurface, constant scalar curvature, constant third mean curvature, second gap, Cartan isoparametric hypersurface}
\begin{document}
\begin{abstract}
	Let \(M^n\subset\sphere^{n+1}(1)\) be a closed minimal hypersurface with constant \(S=|h|^2\) and constant \(f_3=\tr h^3\), where \(h\) is the shape operator.
	We prove that \(S>n\) implies \(S\geq2n\), and that the equality images are precisely the minimal Cartan isoparametric hypersurfaces with three principal curvatures, which occur only in dimensions \(n=3, 6, 12, 24\).
\end{abstract}

\maketitle

\section{Introduction}

Let \(M^n\subset\sphere^{n+1}(1)\) be a closed minimal hypersurface, and let \(h\) denote its shape operator.
Write \(S=|h|^2=\tr h^2\).
Since the Gauss equation gives \(R=n(n-1)-S\), constancy of the scalar curvature \(R\) is equivalent to constancy of \(S\).
Chern's conjecture asks whether, for each fixed dimension \(n\), the possible constant values of \(S\) form a discrete set \cite{chern_minimal_1968,yau_problem_1982}.
Its stronger form asserts that every closed minimal hypersurface with constant scalar curvature in a sphere is isoparametric, that is, all its principal curvatures are constant.
For a minimal isoparametric hypersurface with \(g\) distinct principal curvatures, one has \(S=(g-1)n\), where \(g\in\{1,2,3,4,6\}\) \cite{cartan_isoparametric_1939,munzner_isoparametric_1980,munzner_isoparametric_1981}.
These examples suggest the possible values \(0,n,2n,3n,5n\).
For background and related developments, we refer to \cite{ge_tang_2012,scherfner_weiss_yau_2012,ge_qian_tang_yan_2025}.

The first gap is given by Simons' theorem \cite{simons_minimal_1968}: if \(0\leq S\leq n\), then \(S\equiv0\) or \(S\equiv n\).
The first case is the totally geodesic sphere, and the second was classified independently by Chern--do Carmo--Kobayashi \cite{chern_minimal_1970} and Lawson \cite{lawson_local_1969} as the Clifford hypersurfaces
\[
	\sphere^k\left(\sqrt{\frac{k}{n}}\right)
	\times\sphere^{n-k}\left(\sqrt{\frac{n-k}{n}}\right),
	\qquad 1\leq k\leq n-1.
\]
The next question is whether a constant value \(S>n\) must satisfy \(S\geq2n\), and whether equality characterizes the minimal Cartan hypersurfaces with three principal curvatures.
The second gap therefore concerns both the possible values of \(S\) and the geometry of the hypersurfaces realizing the equality.
In this paper, we answer both questions under the additional assumption that \(f_3=\tr h^3\) is constant.

Peng--Terng \cite{peng_minimal_1983} proved that \(S>n\) implies \(S>n+\frac{1}{12n}\), and obtained the sharp estimate \(S\geq6\) in dimension \(3\).
Chang \cite{chang_minimal_1993} subsequently completed the characterization in that dimension.
In higher dimensions, Yang--Cheng \cite{yang_cheng_1998} enlarged the additive gap to \(\frac{n}{3}\), and Suh--Yang \cite{suh_yang_2007} further enlarged it to \(\frac{3n}{7}\).
A related problem replaces constancy of \(S\) by a pointwise pinching condition; see \cite{peng_scalar_1983,cheng_ishikawa_1999,wei_xu_2007,zhang_pinching_2010,ding_xin_2011,xu_xu_2017}.
In particular, Lei--Xu--Xu \cite{lei_xu_xu_2017} showed that \(0\leq S-n\leq\frac{n}{18}\) forces \(M\) to be a Clifford hypersurface.

Tang--Wei--Yan \cite{tang_wei_yan_2020} and Tang--Yan \cite{tang_yan_2023} established an isoparametric criterion for closed hypersurfaces in spheres.
For \(n>3\), constancy of \(f_k=\tr h^k\), \(1\leq k\leq n-1\), together with nonnegative scalar curvature, implies isoparametricity.
He--Xu--Zhao \cite{he_xu_zhao_2026} proved that a closed minimal hypersurface in \(\sphere^5(1)\) with constant scalar curvature and constant \(f_3\) is isoparametric.
Ge--Liu--Luo--Yan \cite{ge_liu_luo_yan_2026} obtained the same conclusion with constant Gauss--Kronecker curvature in place of constant \(f_3\).

Assuming that \(f_3\) is constant, Yang--Cheng \cite{yang_cheng_1998} proved that \(S>n\) implies \(S\geq\frac{5}{3}n\) for \(n\geq4\).
Cheng--Wei--Yamashiro \cite{cheng_second_2025} improved the estimate for \(n\geq5\) to
\[
	S>1.8252n-0.712898.
\]
Related estimates were obtained for minimal Willmore hypersurfaces with constant scalar curvature:
Deng--Gu--Wei \cite{deng_gu_wei_2017} proved the isoparametric classification in dimension four, while Ge--Tan--Yan--Zhang \cite{ge_tan_yan_zhang_2025} obtained, for \(n\geq5\),
\[
	S\geq n+\frac{4n+9-\sqrt{4n^2+60n+81}}{2}
	\qquad\text{if }S>n.
\]

For the original Chern's conjecture, Tan--Tang--Xie--Yan \cite{tan_tang_xie_yan_2026} recently proved that, in each fixed dimension, the values of \(S\) realized by closed embedded minimal hypersurfaces with constant \(S\) and constant \(f_3\) form a locally finite set.
Neither the topology of the hypersurface nor the constant value of \(f_3\) is fixed.
This establishes a stronger version of discreteness in that class, while the location of the curvature values and the classification of the corresponding hypersurfaces require further arguments.
The present paper determines the sharp second-gap bound under the same curvature assumptions and gives a complete classification at its endpoint.
Our results apply to closed immersions, without an embeddedness assumption.

We consider connected immersed hypersurfaces \(M\subset\mathbb S^{n+1}(1)\).
If \(M\) is one-sided, we work on its two-sided cover and assume that \(f_3\) is constant with respect to a fixed unit normal.
The conclusions about \(S\), the identity \(h^3=3h\), and the immersion are invariant under normal reversal.

\begin{theorem}\label{thm:main}
	Let \(M^n\), \(n\geq5\), be a closed connected minimal hypersurface immersed in \(\sphere^{n+1}(1)\).
	Suppose that \(S=|h|^2\) and \(f_3=\tr h^3\) are constant.
	If \(S>n\), then \(S\geq2n\).

	Equality holds if and only if the image is congruent to a minimal Cartan isoparametric hypersurface with three principal curvatures.
	In this case, the immersion is an embedding and \(n\in\{6,12,24\}\).
\end{theorem}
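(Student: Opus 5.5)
The proof will run through the standard Simons-type integral identities, refined using constancy of both \(S\) and \(f_3\).

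\textbf{First-order identities.} Since \(S\) is constant, Simons' identity gives
\[
\tfrac12\Delta S=|\nabla h|^2+S(n-S)=0,
\]
so \(|\nabla h|^2=S(S-n)\) pointwise. Since \(f_3\) is constant, \(\Delta f_3=0\). In a principal frame this reads
\[
0=3\sum_{i}\lambda_i\,(\Delta h)_{ii}\lambda_i+6\sum_{i,j,k}\lambda_i h_{ijk}^2,
\]
and using \(\Delta h=(n-S)h\) we get
\[
\sum_{i,j,k}\lambda_i h_{ijk}^2=\tfrac12(S-n)f_3 .
\]
The derivative conditions \(\sum_i\lambda_i h_{iik}=0\) and \(\sum_i\lambda_i^2h_{iik}=0\) hold for each \(k\).

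\textbf{Second-order identity.} Next I will use the second-order Simons formula
\[
\tfrac12\Delta|\nabla h|^2=|\nabla^2 h|^2+(2n+3-S)|\nabla h|^2-3A+6B,
\]
where
\[
A=\sum_{i,j,k}\lambda_i^2h_{ijk}^2,\qquad B=\sum_{i,j,k}\lambda_i\lambda_j h_{ijk}^2 .
\]
Since \(|\nabla h|^2\) is constant, the left side vanishes, which yields a pointwise identity
\[
|\nabla^2h|^2=(S-2n-3)S(S-n)+3(A-2B).
\]
The core of the argument is to bound \(3(A-2B)\) from above in terms of \(S\), \(f_3\) and \(S(S-n)\), and to bound \(|\nabla^2h|^2\) from below using the Ricci identities
\[
h_{ijkl}-h_{ijlk}=(\lambda_i-\lambda_j)(1+\lambda_i\lambda_j)h_{ijk}\quad\text{(schematically)},
\]
exactly as in Peng--Terng. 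The extra input is that the constraints from \(f_3\) kill the diagonal components \(h_{iik}\) along two independent directions. I plan to exploit this via a Lagrange-multiplier, or algebraic, estimate of
\[
A-2B=\sum_{i,j,k}\bigl(\lambda_i^2-2\lambda_i\lambda_j\bigr)h_{ijk}^2 ,
\]
symmetrized over the triples \((i,j,k)\) as
\[
\tfrac13\sum_{i,j,k}\bigl(\lambda_i^2+\lambda_j^2+\lambda_k^2-\lambda_i\lambda_j-\lambda_j\lambda_k-\lambda_k\lambda_i\bigr)h_{ijk}^2 ,
\]
together with \(\sum_{i,j,k}\lambda_ih_{ijk}^2=\tfrac12(S-n)f_3\). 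The target inequality is that \(S\in(n,2n)\) forces \(|\nabla^2h|^2<0\) at some point, or in integral form after integrating, which is a contradiction. Equivalently, for \(n<S<2n\) one should have
\[
3(A-2B)\le (2n+3-S)S(S-n)-\bigl(\text{lower bound for }|\nabla^2h|^2\bigr)
\]
with a strict sign.

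\textbf{Main obstacle.} This sharp pointwise or integral estimate is where I expect the difficulty to lie. The earlier constants \(\frac53n\) (Yang--Cheng) and \(1.8252n\) (Cheng--Wei--Yamashiro) came from nonsharp versions of exactly this step. To reach \(2n\) I would first try to also integrate \(\Delta f_4\) or \(\Delta\sum_i\lambda_i h_{iik}^2\)-type quantities, producing an integral identity that eliminates the \(|\nabla^2h|^2\) lower-bound loss. Then I would optimize the resulting quadratic form in \(h_{ijk}\) at each point, reducing to the case of three distinct eigenvalue clusters, and check that the extremal configuration is exactly \(h^3=3h\) on the relevant components.

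\textbf{Equality case.} Suppose \(S=2n\). Then all inequalities are equalities, which should force \(\nabla^2h\) to be determined by \(\nabla h\) and force the principal curvatures to satisfy \(h^3=3h\). Hence the principal curvatures are \(\pm\sqrt3\) and \(0\) with constant multiplicities determined by minimality and \(S=2n\). Thus \(M\) is isoparametric with \(g=3\). By Cartan's classification, the multiplicities are equal, \(m\in\{1,2,4,8\}\), so \(n=3m\in\{3,6,12,24\}\), and \(n\ge5\) leaves \(\{6,12,24\}\). The immersion covers a Cartan hypersurface. Since the focal submanifolds (the Veronese embeddings) are simply connected and the hypersurface is a sphere bundle over them with simply connected fibre (the fibre is \(\sphere^m\), \(m\ge2\)), the Cartan hypersurface is simply connected. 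Hence the covering is trivial and the immersion is an embedding. Conversely, Cartan's examples satisfy \(S=2n\) and \(f_3=0\).
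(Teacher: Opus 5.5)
\textbf{Verdict: there is a genuine gap.} The proposal never proves the first assertion: nothing in it rules out \(n<S<2n\). Your first-order identities are correct, and so is the second-order Simons formula. Your equality-case argument, once \(h^3=3h\) is known, agrees with the paper: multiplicities \(n/3\), Cartan's classification, the bundle \(\sphere^m\to C_m\to\mathbb F P^2\) with \(m\ge2\), simple connectivity, and a trivial covering. One small slip: in a principal frame the Ricci identity reads \(h_{ijkl}-h_{ijlk}=(\lambda_i-\lambda_j)(1+\lambda_i\lambda_j)(\delta_{ik}\delta_{jl}-\delta_{il}\delta_{jk})\), with no \(h_{ijk}\) on the right.

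Your ``Main obstacle'' paragraph is a plan, not an argument. The plan itself, a pointwise optimization of a quadratic form in \(h_{ijk}\) under the trace constraints, is the type of estimate that produced \(\frac53n\) and \(1.8252n-0.712898\). Nothing indicates it can reach \(2n\), and the contradiction the paper actually reaches is integral, not a pointwise ``\(|\nabla^2h|^2<0\) somewhere.'' The equality case inherits the gap. ``All inequalities are equalities'' has no content until the inequalities exist, and at \(S=2n\) you must still derive \(f_3=0\) and \(h^3=3h\); neither follows from \(S=2n\) alone.

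The paper closes the gap with the following chain, for which you would need some substitute. Write \(t=\frac{S-n}{S}\).
\begin{enumerate}
\item It works with the fully symmetrized Hessian \(u_{ijkl}\) and the Cheng--Wei--Yamashiro identity \(S(S-n)(S-2n)=|u|^2+\frac32S(S-n)-\frac32(A-2B)\). It expresses \(A\) and \(B\) through \(f=|h^2-\frac{f_3}{S}h-\frac{S}{n}I|^2\), \(f_3\) and \(\Delta f\).
\item The maximum principle at a maximum of \(f\) gives \(f\le\frac{tS}{3-4t}\bigl(\frac{S}{n}+\frac{f_3^2}{4S^2}\bigr)\), i.e.\ \(\psi\ge0\). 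It also gives an exact formula for \(\int_M\psi\) as the integral of the centered square \(\sum(\lambda_i+\lambda_j+\lambda_k-\frac{3f_3}{2S})^2h_{ijk}^2\).
\item It bounds \(|u|^2\) from below by completing a square against a specific trace-free symmetric 4-tensor built from \(f_{ij}h_{kl}\), \(\delta_{ij}q_{kl}\) and \(h_{ij}h_{kl}\). This leaves the cubic quantity \(\sum_i\lambda_i^2f_{ii}^2-\frac{f_3}{2S}\sum_if_{ii}\lambda_i^3\) with a \emph{negative} coefficient, and that quantity has no sharp pointwise bound.
\item The decisive step is global. The tensor \(r=h^3-\frac{3f_3}{2S}h^2+c\,h+\frac{f_3}{2n}I\), with the constant \(c\) of Proposition~\ref{prop:hessian-cancellation}, is divergence-free exactly because \(\tr h\), \(S\) and \(f_3\) are constant. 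The integrated Weitzenb\"ock identity for \(r\) bounds \(\int_M\bigl(|r|^2-\psi^2/S\bigr)\) by a multiple of \(\int_M\psi\). Young's inequality and \(\psi\ge0\) then bound the integral of the cubic term by multiples of \(\int_M\psi\) and \(\int_M f\).
\item Yang--Cheng restricts the range to \(\frac25\le t\le\frac12\). An explicit polynomial check then gives \(0\ge\Theta=\Theta_0\Vol(M)+\Theta_1\int_M\psi\), with \(\Theta_0\ge0\) (zero only when \(t=\frac12\) and \(f_3=0\)) and \(\Theta_1>0\).
\item This forces \(S=2n\), \(f_3=0\) and \(\psi\equiv0\), and then \(r\equiv0\). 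At these values \(r\equiv0\) is precisely \(h^3=3h\).
\end{enumerate}
Without a replacement for steps 3--5, in particular some \(L^2\) control of the cubic term, your outline does not prove the gap.
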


Thus, under the constant cubic trace assumption, both the second gap predicted by Chern's conjecture and the isoparametric classification at \(S=2n\) hold.
Together with the first gap theorem, this result classifies all immersed images with constant \(S\leq2n\) in this class: they are the totally geodesic sphere, the Clifford hypersurfaces, and the minimal Cartan hypersurfaces with \(g=3\).
Moreover, if \(n\geq5\) and \(n\notin\{6,12,24\}\), then \(S>n\) implies \(S>2n\).

To describe the equality examples, let \(\mathbb F\in\{\mathbb R,\mathbb C,\mathbb H,\mathbb O\}\) and \(m=\dim_{\mathbb R}\mathbb F\in\{1,2,4,8\}\).
The minimal member of Cartan's \(g=3\) family is the tube of radius \(\frac{\pi}{6}\) about the canonical minimal embedding \(\mathbb F P^2\subset\sphere^{3m+1}(1)\) \cite{cartan_isoparametric_1939,cecil_ryan_2015}.
It has dimension \(n=3m\) and principal curvatures \(\sqrt3,0,-\sqrt3\), each of multiplicity \(m\), so \(S=2n\), \(f_3=0\), and \(h^3=3h\).
For \(n\geq5\), the equality examples therefore correspond to \(\mathbb CP^2\), \(\mathbb HP^2\), and \(\mathbb OP^2\).

\begin{remark}\label{rem:low-dimensions}
	The restriction \(n\geq5\) in Theorem~\ref{thm:main} is made only to exclude the low-dimensional cases that are already understood.
	When \(n=3\), Chang \cite{chang_minimal_1993} proved that every closed minimal hypersurface in \(\sphere^4(1)\) with constant scalar curvature is isoparametric.
	In particular, its possible values of \(S\) are \(0\), \(3\), and \(6\), so \(S>3\) already implies the sharp second-gap estimate \(S\geq6=2n\).
	When \(n=4\), He--Xu--Zhao \cite{he_xu_zhao_2026} proved that a closed minimal hypersurface in \(\sphere^5(1)\) with constant scalar curvature and constant third mean curvature is isoparametric; under minimality, the latter condition is equivalent to the constancy of \(f_3\).
	The present argument also applies to \(n=4\) after adjusting the coarse numerical bounds in Lemma~\ref{lem:algebraic-positivity}.
	Use \(\frac{1}{n}\leq\frac{1}{4}\), replace \(\frac{28}{5}\) in \eqref{eq:A2-bound} by \(6\), and use \((n+1)(n+3)\leq\frac{35}{16}n^2\) in the endpoint estimate for \(P_c\).
	In the final estimate for \(\Theta_1\), use
	\[
		15n^3-148n>5n^3,\qquad
		102n^3+4n(93n+290)<268n^3,\qquad
		32n^2(n+1)\leq40n^3.
	\]
	These substitutions preserve strict positivity at both endpoints; the other estimates remain valid for \(n\geq4\).
	Thus the second-gap estimate in Theorem~\ref{thm:main} also holds in dimension four.
\end{remark}

By Yang–Cheng's estimate, it suffices to consider the range \(\frac{5}{3}n\leq S\leq2n\).
We first derive a lower bound for the squared norm of the symmetrized second covariant derivative of \(h\) by completing squares.
The remaining cubic expression is estimated in \(L^2\) using an integrated Weitzenböck identity for a cubic polynomial in \(h\) with constant coefficients.
Combining these estimates reduces the problem to a scalar inequality.
The resulting algebraic positivity and the \(L^2\)-estimate force \(S=2n\), \(f_3=0\), and \(h^3=3h\).
Together with minimality, these identities determine the three principal curvatures and their common multiplicity.
Cartan's classification identifies the image, and a covering argument shows that the immersion is an embedding.

Section~\ref{sec:curvature-identities} establishes the curvature identities and the \(L^2\)-estimate for the cubic expression.
Section~\ref{sec:fourth-order-comparison} combines these estimates to derive a global integral inequality. 
Section~\ref{sec:algebraic-estimates} proves the algebraic inequalities and Proposition~\ref{prop:closed-interval-rigidity}.
Section~\ref{sec:proof-main} completes the proof of Theorem~\ref{thm:main}, including the classification of the equality case and the embeddedness of the immersion.

\section{Curvature identities and preliminary lemmas}
\label{sec:curvature-identities}

Throughout this paper, all geometric quantities are taken with respect to the induced metric on \(M\).
The volume element is omitted from the notation in all integrals.
Let \(\{e_i\}_{i=1}^n\) be a local orthonormal frame, and write \(h_{ijk}=\nabla_kh_{ij}\) and \(h_{ijkl}=\nabla_l\nabla_kh_{ij}\).
The Gauss equation, the Codazzi equation, and the Ricci formula are
\begin{equation}\label{eq:gauss-ricci}
	\begin{aligned}
		R_{ijkl}&=\delta_{ik}\delta_{jl}-\delta_{il}\delta_{jk}
		+h_{ik}h_{jl}-h_{il}h_{jk},\\
		h_{ijk}&=h_{ikj},\\
		h_{ijkl}-h_{ijlk}&=\sum_m h_{im}R_{mjkl}+\sum_m h_{mj}R_{mikl}.
	\end{aligned}
\end{equation}
At each point, choose an orthonormal frame with \(h_{ij}=\lambda_i\delta_{ij}\), and set
\[
	f_k=\sum_i\lambda_i^k,\qquad
	A=\sum_{i,j,k}\lambda_i^2h_{ijk}^2,\qquad
	B=\sum_{i,j,k}\lambda_i\lambda_jh_{ijk}^2,\qquad
	C=\sum_{i,j,k}\lambda_i h_{ijk}^2.
\]
Throughout the proof, we assume that \(S>n\) and set
\begin{equation}\label{eq:gap-parameter}
	t=\frac{S-n}{S}.
\end{equation}
Reversing the unit normal if necessary, we may assume that \(f_3\geq0\).

The Simons identity \cite{simons_minimal_1968} and the constancy of \(S\) give
\begin{equation}\label{eq:nabla-h}
	\sum_{i,j,k}h_{ijk}^2=S(S-n)=tS^2.
\end{equation}
Differentiating the constant functions \(\tr h\), \(S\), and \(f_3\) yields
\begin{equation}\label{eq:derivative-traces}
	\sum_i h_{iik}=0,\qquad
	\sum_i\lambda_i h_{iik}=0,\qquad
	\sum_i\lambda_i^2h_{iik}=0.
\end{equation}
Since \(f_3\) is constant, the identity \(\Delta f_3=3(n-S)f_3+6C\) gives
\begin{equation}\label{eq:C}
	C=\frac{1}{2}(S-n)f_3=\frac{1}{2}tSf_3.
\end{equation}
Since \(h\) is trace-free, the metric and \(h\) are orthogonal.
Subtracting the corresponding components from \(h^2\) gives a symmetric two-tensor orthogonal to both.
Following the notation in \cite{yang_cheng_1998}, define its components \(f_{ij}\) and its squared norm \(f\) by
\begin{equation}\label{eq:yang-cheng-f}
	\begin{aligned}
		f_{ij}&=\sum_p h_{ip}h_{pj}-\frac{f_3}{S}h_{ij}-\frac{S}{n}\delta_{ij},\\
		f&=\sum_{i,j}f_{ij}^2=f_4-\frac{f_3^2}{S}-\frac{S^2}{n}.
	\end{aligned}
\end{equation}
Since \(\sum\limits_i\lambda_i=0\), \(\sum\limits_i\lambda_i^2=S\), and \(\sum\limits_i\lambda_i^3=f_3\), the definition in \eqref{eq:yang-cheng-f} gives
\begin{equation}\label{eq:f-orthogonality}
	\sum_i f_{ii}=0,\qquad
	\sum_{i,j}h_{ij}f_{ij}=0,\qquad
	\sum_i\lambda_i^2f_{ii}=f.
\end{equation}
The identity in \cite[(3.6)]{cheng_second_2025} gives
\begin{equation}\label{eq:Aminus2B}
	A-2B=Sf_4-f_3^2-S^2
	=Sf+\frac{tS^2}{1-t}.
\end{equation}
The constancy of \(S\) and \(f_3\), together with the definition of \(f\) in \eqref{eq:yang-cheng-f}, yields
\begin{equation}\label{eq:lapf}
	\frac{1}{4}\Delta f=\frac{1}{4}\Delta f_4=-tS\left(f+\frac{f_3^2}{S}+\frac{S^2}{n}\right)+2A+B.
\end{equation}
Solving \eqref{eq:Aminus2B} and \eqref{eq:lapf} for \(A\) and \(B\) gives
\begin{equation}\label{eq:AB}
	\begin{aligned}
		A&=\frac{(2t+1)Sf+2tf_3^2+\frac{3}{n}tS^3}{5}+\frac{1}{10}\Delta f,\\
		B&=\frac{(t-2)Sf+tf_3^2-\frac{1}{n}tS^3}{5}+\frac{1}{20}\Delta f.
	\end{aligned}
\end{equation}

By \eqref{eq:nabla-h}, \eqref{eq:C}, and the symmetry of \(h_{ijk}\), the weighted mean of \(\lambda_i+\lambda_j+\lambda_k\) with weights \(h_{ijk}^2\) is \(\frac{3C}{tS^2}=\frac{3f_3}{2S}\).
Expanding the corresponding centered square yields
\begin{equation}\label{eq:centered-square}
	A+2B-\frac{3}{4}tf_3^2
	=\frac{1}{3}\sum_{i,j,k}\left(\lambda_i+\lambda_j+\lambda_k-\frac{3f_3}{2S}\right)^2h_{ijk}^2\geq0.
\end{equation}
Substituting \eqref{eq:AB} into \eqref{eq:centered-square} yields
\begin{equation}\label{eq:centered-square-laplacian}
	\frac{5}{3}\sum_{i,j,k}\left(\lambda_i+\lambda_j+\lambda_k-\frac{3f_3}{2S}\right)^2h_{ijk}^2
	=(4t-3)Sf+\frac{1}{4}tf_3^2+\frac{tS^3}{n}+\Delta f.
\end{equation}
Let \(p\) be a maximum point of \(f\).
Then \(\Delta f(p)\leq0\), so for \(t<\frac{3}{4}\), \eqref{eq:centered-square-laplacian} implies
\begin{equation}\label{eq:fp-bound}
	0\leq f(p)\leq\frac{tS}{3-4t}\left(\frac{S}{n}+\frac{f_3^2}{4S^2}\right).
\end{equation}

By \cite[Theorem~2]{yang_cheng_1998}, the constancy of \(S\) and \(f_3\) and the assumption \(S>n\) imply \(S-n\geq\frac{2}{3}n\).
Since \(S-n=tS\), this reduces the interval \(n<S<2n\) to \(\frac{2}{5}\leq t<\frac{1}{2}\).
To include the equality case, we work from now on in the closed range \(\frac{5}{3}n\leq S\leq2n\), or equivalently \(\frac{2}{5}\leq t\leq\frac{1}{2}\).

By \eqref{eq:fp-bound}, the function
\begin{equation}\label{eq:psi-definition}
	\psi:=\frac{tS}{3-4t}\left(\frac{S}{n}+\frac{f_3^2}{4S^2}\right)-f
\end{equation}
is nonnegative on \(M\).
The next lemma expresses the integral of the centered square in \eqref{eq:centered-square} in terms of \(\int_M\psi\).
We will use this identity in Proposition~\ref{prop:hessian-cancellation}.
\begin{lemma}\label{lem:defect-evolution}
	Let \(M^n\subset\sphere^{n+1}(1)\) be a closed minimal hypersurface such that \(S\) and \(f_3\) are constant.
	Assume that \(\frac{5}{3}n\leq S\leq2n\), equivalently \(\frac{2}{5}\leq t\leq\frac{1}{2}\).
	Then
	\begin{equation}\label{eq:psi-range-integral}
		\begin{aligned}
			&0\leq\psi\leq\frac{tS}{3-4t}\left(\frac{S}{n}+\frac{f_3^2}{4S^2}\right),\\
			&\int_M\sum_{i,j,k}\left(\lambda_i+\lambda_j+\lambda_k-\frac{3f_3}{2S}\right)^2h_{ijk}^2
			=\frac{3S(3-4t)}{5}\int_M\psi.
		\end{aligned}
	\end{equation}
\end{lemma}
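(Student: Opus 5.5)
The plan is to read both parts of \eqref{eq:psi-range-integral} off the pointwise identity \eqref{eq:centered-square-laplacian}, integrate over \(M\), and use that \(S\), \(f_3\), \(t\) are constant.

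\emph{Pointwise bounds.} The lower bound \(\psi\geq0\) is \eqref{eq:fp-bound}. Since \(f\) is the maximum of \(f\) on the closed manifold \(M\) at \(p\), we have \(f\leq f(p)\) everywhere. The bound on the right of \eqref{eq:fp-bound} is a constant, because \(S\) and \(f_3\) are constant and \(t\) is determined by \(S\). It is valid since \(t\leq\frac12<\frac34\), so \(3-4t>0\). The upper bound on \(\psi\) follows from \(f=\sum_{i,j}f_{ij}^2\geq0\), which holds by definition in \eqref{eq:yang-cheng-f}.

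\emph{Integral identity.} Set
\[
K:=\frac{tS}{3-4t}\left(\frac{S}{n}+\frac{f_3^2}{4S^2}\right),
\]
so that \(\psi=K-f\). I would rewrite the right-hand side of \eqref{eq:centered-square-laplacian} in terms of \(\psi\):
\[
(4t-3)Sf+\frac14 tf_3^2+\frac{tS^3}{n}
=(4t-3)Sf+(3-4t)S\cdot K
=(3-4t)S\,(K-f)=(3-4t)S\,\psi .
\]
The middle equality uses
\[
(3-4t)SK=tS^2\left(\frac{S}{n}+\frac{f_3^2}{4S^2}\right)=\frac{tS^3}{n}+\frac{tf_3^2}{4}.
\]
Hence
\[
\frac53\sum_{i,j,k}\Bigl(\lambda_i+\lambda_j+\lambda_k-\frac{3f_3}{2S}\Bigr)^2h_{ijk}^2=(3-4t)S\psi+\Delta f .
\]

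\emph{Integration.} Integrate this over the closed manifold \(M\). The divergence theorem gives \(\int_M\Delta f=0\), where \(f\) is smooth as a polynomial in the entries of \(h\). Multiplying by \(\frac35\) yields the stated integral identity.

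The left side of the identity is frame-independent. It equals \(\frac13\) times the expression on the right of \eqref{eq:centered-square}, with the coefficient already absorbed in \eqref{eq:centered-square-laplacian}. It can therefore be integrated as a globally defined smooth function, even though the pointwise diagonalizing frame is only local. This is the only point requiring care.

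There is no real obstacle here. The one point to double-check is that the algebraic rearrangement matches the normalization in \eqref{eq:centered-square-laplacian}. That identity was obtained from \eqref{eq:centered-square} and \eqref{eq:AB}: inserting \(A\) and \(B\) gives \(A+2B-\frac34 tf_3^2=\frac{(4t-3)Sf+\frac14 tf_3^2+tS^3/n}{5}+\frac15\Delta f\), and multiplying by \(5\) matches the factor \(\frac53\) times the sum.
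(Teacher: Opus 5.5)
Your proposal is correct and is essentially the paper's argument. Rewriting the right-hand side of \eqref{eq:centered-square-laplacian} as $(3-4t)S\psi+\Delta f$ is the paper's identity $\Delta\psi=S(3-4t)\psi-\frac53\sum_{i,j,k}(\lambda_i+\lambda_j+\lambda_k-\frac{3f_3}{2S})^2h_{ijk}^2$ (since $\Delta\psi=-\Delta f$), and your bounds come from \eqref{eq:fp-bound} and $f\geq0$ exactly as in the paper. One small slip in your frame-independence remark: the sum is $3$ times, not $\frac13$ times, the right side of \eqref{eq:centered-square}, i.e. it equals $3(A+2B)-\frac94tf_3^2$, and that is what makes it a globally defined smooth function.
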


\begin{proof}
	Since \(\Delta\psi=-\Delta f\), substituting \eqref{eq:psi-definition} into \eqref{eq:centered-square-laplacian} yields
	\begin{equation}\label{eq:psi-equation}
		\Delta\psi=S(3-4t)\psi
		-\frac{5}{3}\sum_{i,j,k}\left(\lambda_i+\lambda_j+\lambda_k-\frac{3f_3}{2S}\right)^2h_{ijk}^2.
	\end{equation}
	The bounds for \(\psi\) follow from \eqref{eq:fp-bound} and \(f\geq0\).
	Integrating \eqref{eq:psi-equation} over the closed manifold \(M\) gives the integral identity in \eqref{eq:psi-range-integral}.
\end{proof}
Define
\begin{equation}\label{def:u}
	u_{ijkl}:=\frac{1}{4}(h_{ijkl}+h_{jkli}+h_{klij}+h_{lijk}).
\end{equation}
The Codazzi and Ricci identities \eqref{eq:gauss-ricci} imply that \(u\) is fully symmetric and that \(\sum\limits_k u_{ijkk}=-\frac{1}{2}(S-n)h_{ij}\).
The identity of Cheng, Wei, and Yamashiro \cite[Equation~(3.8)]{cheng_second_2025} is
\begin{equation}\label{C-W-Y}
	S(S-n)(S-2n)=\sum_{i,j,k,l}u_{ijkl}^2+\frac{3}{2}S(S-n)-\frac{3}{2}(A-2B).
\end{equation}
Substituting \eqref{eq:gap-parameter}, \eqref{eq:yang-cheng-f}, and \eqref{eq:Aminus2B} into \eqref{C-W-Y} yields
\begin{equation}\label{eq:u-norm}
	\sum_{i,j,k,l}u_{ijkl}^2
	=t(2t-1)S^3+\frac{3}{2}Sf+\frac{3t^2S^3}{2n}.
\end{equation}
Since \(S>n\), both \(nf_3^2+2S^3\) and \(nf_3^2+4S^3\) are positive.
To estimate \(\sum\limits_{i,j,k,l}u_{ijkl}^2\), define the symmetric two-tensor
\begin{equation}\label{eq:q-definition}
	\begin{aligned}
		q_{ij}
		&:=\sum_p f_{ip}h_{pj}
		-\frac{f_3}{4S}\left(\sum_p h_{ip}h_{pj}-\frac{S}{n}\delta_{ij}\right)+\frac{tS(nf_3^2+2S^3)}{2(nf_3^2+4S^3)}h_{ij}\\
		&=\sum_{p,m}h_{im}h_{mp}h_{pj}
		-\frac{5f_3}{4S}\sum_p h_{ip}h_{pj}+\left(\frac{tS(nf_3^2+2S^3)}{2(nf_3^2+4S^3)}-\frac{S}{n}\right)h_{ij}
		+\frac{f_3}{4n}\delta_{ij}.
	\end{aligned}
\end{equation}
Contracting \eqref{eq:q-definition} and using \eqref{eq:yang-cheng-f} gives
\begin{equation}\label{eq:q-contractions}
	\sum_i q_{ii}=0,\qquad
	\sum_{i,j}h_{ij}q_{ij}
	=f-\frac{f_3^2}{4S}
	+\frac{tS^2(nf_3^2+2S^3)}{2(nf_3^2+4S^3)}.
\end{equation}
Using \eqref{eq:C}, \eqref{eq:Aminus2B}, and \eqref{eq:AB}, together with the symmetry of \(u\), we obtain the contractions
\begin{align}
    \sum_{i,j,k,l}u_{ijkl}(f_{ij}h_{kl}+h_{ij}f_{kl})
    =&-2A+Sf+\frac{2tS^2}{1-t}+\frac{2f_3C}{S}\notag\\
    =&\frac{(3-4t)Sf+tf_3^2+\frac{4tS^3}{n}-\Delta f}{5},\notag\\
    \sum_{i,j,k,l}u_{ijkl}h_{ij}h_{kl}
    =&-C,\label{eq:projection-contractions}\\
    \sum_{i,j,k,l}u_{ijkl}(h_{ij}\delta_{kl}+h_{kl}\delta_{ij})
    =&-tS^2,\notag\\
    \sum_{i,j,k,l}u_{ijkl}\sum_{\mathrm{cyc}(j,k,l)}
    \bigl(\delta_{ij}q_{kl}+\delta_{kl}q_{ij}\bigr)
    =&-3tS\sum_i\lambda_iq_{ii},\notag
\end{align}
where \(\sum\limits_{\mathrm{cyc}(j,k,l)}\) denotes the sum over the ordered triples \((j,k,l)\), \((k,l,j)\), and \((l,j,k)\), with \(i\) fixed.
In the sequel, we continue to use this notation.

We estimate \(\sum\limits_{i,j,k,l}u_{ijkl}^2\) by completing a square.
The definition \eqref{eq:q-definition} and the identity \(\sum\limits_k u_{ijkk}=-\frac{1}{2}(S-n)h_{ij}\) imply that the symmetric four-tensor inside braces below is trace-free.
The nonnegativity of its squared norm gives
\begin{equation}\label{eq:component-square}
	\begin{aligned}
		0\leq\sum_{i,j,k,l}\Bigg\{&u_{ijkl}
		-\frac{nf_3^2+4S^3}{4(nf_3^2+2S^3)}
		\Bigg(\sum_{\mathrm{cyc}(j,k,l)}
		\bigl(f_{ij}h_{kl}+f_{kl}h_{ij}\bigr)\\
		&-\frac{4}{n+4}
		\sum_{\mathrm{cyc}(j,k,l)}
		\bigl(\delta_{ij}q_{kl}+\delta_{kl}q_{ij}\bigr)\Bigg)\\
		&+\frac{f_3(nf_3^2+4S^3)}{8S(nf_3^2+2S^3)}
		\sum_{\mathrm{cyc}(j,k,l)}
		\left(h_{ij}h_{kl}
		-\frac{2S}{n(n+2)}\delta_{ij}\delta_{kl}\right)
		\Bigg\}^2.
	\end{aligned}
\end{equation}
Expanding the squared norms and using the definitions of \(f_{ij}\) and \(q_{ij}\) in \eqref{eq:yang-cheng-f} and \eqref{eq:q-definition} yields
\begin{equation}\label{equations1}
	\begin{aligned}
		&\sum_{i,j,k,l}
		\Big(
		\sum_{\mathrm{cyc}(j,k,l)}
		\bigl(f_{ij}h_{kl}+f_{kl}h_{ij}\bigr)
		\Big)^2
		=6Sf+24\sum_i\lambda_i^2f_{ii}^2,\\
		&\sum_{i,j,k,l}
		\Big(
		\sum_{\mathrm{cyc}(j,k,l)}
		\big(
		h_{ij}h_{kl}
		-\frac{2S}{n(n+2)}\delta_{ij}\delta_{kl}
		\big)
		\Big)^2
		=3S^2+6f_4-\frac{12S^2}{n(n+2)},\\
		&\sum_{i,j,k,l}
		\Big(
		\sum_{\mathrm{cyc}(j,k,l)}
		\bigl(\delta_{ij}q_{kl}+\delta_{kl}q_{ij}\bigr)
		\Big)^2
		=6(n+4)\sum_iq_{ii}^2,\\
		&\sum_{i,j,k,l}\Big(
		\sum_{\mathrm{cyc}(j,k,l)}(\delta_{ij}h_{kl}+\delta_{kl}h_{ij})
		\Big)^2=6(n+4)S.
	\end{aligned}
\end{equation}
The mixed contractions are
\begin{equation}\label{equations2}
	\begin{aligned}
		&\sum_{i,j,k,l}
		\Big(
		\sum_{\mathrm{cyc}(j,k,l)}
		\bigl(f_{ij}h_{kl}+f_{kl}h_{ij}\bigr)
		\Big)
		\Big(
		\sum_{\mathrm{cyc}(j,k,l)}
		\big(
		h_{ij}h_{kl}
		-\frac{2S}{n(n+2)}\delta_{ij}\delta_{kl}
		\big)
		\Big)
		=12\sum_i f_{ii}\lambda_i^3,\\
		&\sum_{i,j,k,l}
		\Big(
		\sum_{\mathrm{cyc}(j,k,l)}
		\bigl(f_{ij}h_{kl}+f_{kl}h_{ij}\bigr)
		\Big)
		\Big(
		\sum_{\mathrm{cyc}(j,k,l)}
		\bigl(\delta_{ij}q_{kl}+\delta_{kl}q_{ij}\bigr)
		\Big)
		=24\sum_i\lambda_i f_{ii}q_{ii},\\
		&\sum_{i,j,k,l}
		\Big(
		\sum_{\mathrm{cyc}(j,k,l)}
		\big(
		h_{ij}h_{kl}
		-\frac{2S}{n(n+2)}\delta_{ij}\delta_{kl}
		\big)
		\Big)
		\Big(
		\sum_{\mathrm{cyc}(j,k,l)}
		\bigl(\delta_{ij}q_{kl}+\delta_{kl}q_{ij}\bigr)
		\Big)
		=12\sum_i\lambda_i^2q_{ii}.
	\end{aligned}
\end{equation}
The terms containing \(q_{ij}\) in the expansion of \eqref{eq:component-square} are
\[
	\begin{aligned}
		&\frac{6(nf_3^2+4S^3)^2}
		{(n+4)(nf_3^2+2S^3)^2}
		\left(
		\sum_iq_{ii}^2
		-2\sum_i\lambda_i f_{ii}q_{ii}
		+\frac{f_3}{2S}\sum_i\lambda_i^2q_{ii}
		\right)\\
		&-\frac{6tS(nf_3^2+4S^3)}
		{(n+4)(nf_3^2+2S^3)}
		\sum_i\lambda_iq_{ii}.
	\end{aligned}
\]
The definition \eqref{eq:q-definition} gives
\[
	q_{ii}-\lambda_i f_{ii}+\frac{f_3}{4S}\lambda_i^2
	=
	\frac{f_3}{4n}
	+\frac{tS(nf_3^2+2S^3)}
	{2(nf_3^2+4S^3)}\lambda_i .
\]
Completing the square and using this identity yields
\begin{equation}\label{eq:q-quadratic-contraction}
	\begin{aligned}
		&\sum_iq_{ii}^2-2\sum_i\lambda_i f_{ii}q_{ii}
		+\frac{f_3}{2S}\sum_i\lambda_i^2q_{ii}\\
		&=\sum_i\left(q_{ii}-\lambda_i f_{ii}
		+\frac{f_3}{4S}\lambda_i^2\right)^2
		-\sum_i\left(\lambda_i f_{ii}-\frac{f_3}{4S}\lambda_i^2\right)^2\\
		&=-\sum_i\lambda_i^2f_{ii}^2
		+\frac{f_3}{2S}\sum_i f_{ii}\lambda_i^3
		-\frac{f_3^2f_4}{16S^2}+\frac{f_3^2}{16n}
		+\frac{t^2S^3(nf_3^2+2S^3)^2}{4(nf_3^2+4S^3)^2}.
	\end{aligned}
\end{equation}
We substitute \eqref{eq:projection-contractions}, \eqref{equations1}, \eqref{equations2}, and \eqref{eq:q-quadratic-contraction} into \eqref{eq:component-square}.
Using \eqref{eq:q-contractions} and \(f_4=f+\frac{f_3^2}{S}+\frac{S^2}{n}\) then yields
\begin{equation}\label{eq:component-square-expansion}
	\begin{aligned}
		\sum_{i,j,k,l}u_{ijkl}^2\geq{}&
		\frac{3t^2S^3}{2(n+4)}+\frac{3(nf_3^2+4S^3)}
		{2(nf_3^2+2S^3)}
		\Bigg\{
		S\left(
		\frac{3-4t}{5}
		+\frac{4t}{n+4}
		\right)f
		+\frac{t(9n+16)}{20(n+4)}f_3^2\\
		&+\frac{4tS^3}{5n}
		-\frac{1}{5}\Delta f
		\Bigg\}-\frac{3n(nf_3^2+4S^3)^2}
		{2(n+4)(nf_3^2+2S^3)^2}
		\Bigg\{
		\sum_i\lambda_i^2f_{ii}^2
		-\frac{f_3}{2S}\sum_i f_{ii}\lambda_i^3\\
		&
		+\frac{(n+4)S}{4n}f
		+\frac{f_3^2}{32}
		\left(
		1+\frac{6}{n}
		+\frac{4}{n(n+2)}
		+\frac{2f}{S^2}
		+\frac{2f_3^2}{S^3}
		\right)
		\Bigg\}.
	\end{aligned}
\end{equation}
The expression \(\sum\limits_i\lambda_i^2f_{ii}^2-\frac{f_3}{2S}\sum\limits_i f_{ii}\lambda_i^3\) occurs with a negative coefficient in \eqref{eq:component-square-expansion}, so we need an upper bound for its integral over \(M\).
The following identities reduce this estimate to the \(L^2\)-bound in Proposition~\ref{prop:hessian-cancellation}, Young's inequality, and the upper bound for \(f\) in \eqref{eq:fp-bound}.

The definitions of \(f_{ij}\) and \(f\) in \eqref{eq:yang-cheng-f} give
\begin{equation}\label{eq:ob}
	\sum_i f_{ii}\lambda_i^3
	=\frac{f_3}{S}f+\sum_i\lambda_i f_{ii}^2.
\end{equation}
By \eqref{eq:f-orthogonality}, the symmetric two-tensor with components \(\sum\limits_p f_{ip}h_{pj}-\frac{f_3}{2S}f_{ij}\) has inner product \(f\) with \(h\).
Since \(|h|^2=S\), subtracting \(\frac{f}{S}h\) gives a tensor orthogonal to \(h\).
Completing the square and using \eqref{eq:ob} yields
\begin{equation}\label{eq:cubic-mixed-identity}
	\begin{aligned}
		\sum_i\lambda_i^2f_{ii}^2
		-\frac{f_3}{2S}\sum_i f_{ii}\lambda_i^3
		&=\sum_i\left[f_{ii}\left(\lambda_i-\frac{f_3}{2S}\right)
		-\frac{f}{S}\lambda_i\right]^2+\frac{f^2}{S}\\
		&\quad+\frac{f_3}{2S}\sum_i
		\left[f_{ii}\left(\lambda_i-\frac{f_3}{2S}\right)
		-\frac{f}{S}\lambda_i\right]f_{ii}
		-\frac{f_3^2}{2S^2}f.
	\end{aligned}
\end{equation}
Expanding the square and using \eqref{eq:f-orthogonality} gives
\begin{equation}\label{eq:cubic-square}
	\begin{aligned}
		\sum_i\left[f_{ii}\left(\lambda_i-\frac{f_3}{2S}\right)
		-\frac{f}{S}\lambda_i\right]^2
		&=\sum_{i,j}\left(
		\sum_p f_{ip}h_{pj}-\frac{f_3}{2S}f_{ij}-\frac{f}{S}h_{ij}
		\right)^2\\
		&=\sum_i\left(\lambda_i-\frac{f_3}{2S}\right)^2f_{ii}^2
		-\frac{f^2}{S},
	\end{aligned}
\end{equation}
where the first equality expresses the sum as the squared norm of a smooth tensor and hence shows that it is independent of the chosen principal frame.

To estimate \eqref{eq:cubic-square} globally, we shall apply a Weitzenb\"ock formula to a suitable symmetric two-tensor.
The tensor used below will be divergence-free but need not satisfy the Codazzi equation.
For a symmetric two-tensor \((b_{ij})\), write \(b_{ijk}=\nabla_kb_{ij}\), and its divergence has components \(\sum\limits_jb_{ijj}\).
We shall then derive the following integrated Weitzenb\"ock identity.

\begin{lemma}\label{lem:codazzi-hodge}
	Let \((b_{ij})\) be a smooth symmetric two-tensor on a closed Riemannian manifold \(M\).
	With the Ricci convention in \eqref{eq:gauss-ricci}, we have
	\begin{equation}\label{eq:integrated-weitzenbock}
		\begin{aligned}
			\int_M\Bigg\{\sum_{i,j,k}b_{ijk}^2
			-\sum_i\Big(\sum_j b_{ijj}\Big)^2
			-\frac{1}{2}\sum_{i,j,k}(b_{ikj}-b_{ijk})^2
			+\sum_{i<j}R_{ijij}(b_{ii}-b_{jj})^2\Bigg\}=0,
		\end{aligned}
	\end{equation}
	where the orthonormal frame diagonalizes \((b_{ij})\) at the point where the curvature term is evaluated.
	Equivalently,
	\begin{equation}\label{eq:component-weitzenbock}
		\begin{aligned}
			\int_M\sum_{i,j,k}b_{ijk}b_{ikj}
			=\int_M\sum_i\Big(\sum_j b_{ijj}\Big)^2
			-\int_M\sum_{i<j}R_{ijij}(b_{ii}-b_{jj})^2.
		\end{aligned}
	\end{equation}
\end{lemma}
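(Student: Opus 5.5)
The plan is to integrate by parts twice and commute covariant derivatives with the Ricci identity. First, for the symmetric two-tensor \(b\), the divergence theorem applied to the vector fields \(X_i=\sum_{j,k}b_{ij}b_{ijk}\) and \(Y_i=\sum_{j,k}b_{ij}b_{ikj}\), or equivalently direct integration by parts, gives
\[
\int_M\sum_{i,j,k}b_{ijk}b_{ikj}=-\int_M\sum_{i,j,k}b_{ij}b_{ikjk}
\]
and
\[
\int_M\sum_i\Big(\sum_j b_{ijj}\Big)^2=-\int_M\sum_{i,j,k}b_{ij}b_{ikkj}.
\]
Here the index ordering follows the convention \(b_{ijkl}=\nabla_l\nabla_kb_{ij}\). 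Subtracting the two, the difference of the integrands is \(\sum b_{ij}(b_{ikkj}-b_{ikjk})\), a commutator of second covariant derivatives.

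Next I apply the Ricci formula in \eqref{eq:gauss-ricci}, which holds for any symmetric two-tensor with the same sign convention:
\[
b_{ikkj}-b_{ikjk}=\sum_m b_{im}R_{mkkj}+\sum_m b_{mk}R_{mikj}.
\]
Contracting with \(b_{ij}\) and evaluating in a frame diagonalizing \(b\) at the point, with eigenvalues \(\beta_i\), the first term gives \(\sum_{i,k}\beta_i^2R_{ikki}=-\sum_{i,k}\beta_i^2R_{ikik}\). The second gives \(\sum_{i,k}\beta_i\beta_kR_{kiki}\) (with \(m=k\), \(j=i\)). Their sum is
\[
-\sum_{i,k}R_{ikik}(\beta_i^2-\beta_i\beta_k)=-\sum_{i<k}R_{ikik}(\beta_i-\beta_k)^2,
\]
using the symmetry \(R_{ikik}=R_{kiki}\). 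Tracking signs, this yields \eqref{eq:component-weitzenbock}:
\[
\int\sum b_{ijk}b_{ikj}=\int\sum_i\Big(\sum_jb_{ijj}\Big)^2-\int\sum_{i<j}R_{ijij}(b_{ii}-b_{jj})^2.
\]
The main care point is exactly this sign bookkeeping between the paper's convention \(R_{ijij}=1+\lambda_i\lambda_j\) (the sectional curvature) and the order of differentiation. I would check it against the model case of the round sphere with \(b=\) a Codazzi tensor, where the classical Bochner identity is known.

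Finally, the algebraic identity
\[
\sum b_{ijk}b_{ikj}=\sum b_{ijk}^2-\frac12\sum(b_{ikj}-b_{ijk})^2
\]
follows from expanding the square: \(\sum(b_{ikj}-b_{ijk})^2=2\sum b_{ijk}^2-2\sum b_{ijk}b_{ikj}\). Substituting this into \eqref{eq:component-weitzenbock} gives \eqref{eq:integrated-weitzenbock}. The curvature term is frame-independent as the full contraction \(\sum b_{ij}(b_{im}R_{mkkj}+b_{mk}R_{mikj})\), so evaluating pointwise in a diagonalizing frame is legitimate.
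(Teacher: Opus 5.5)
Your proposal is correct and follows the paper's proof essentially step for step. Both arguments use two integrations by parts, then apply the Ricci identity to the commutator \(b_{ikkj}-b_{ikjk}\), evaluate it in a frame diagonalizing \(b\) while pairing the \((i,k)\) and \((k,i)\) terms, and finish with the algebraic expansion of \(\frac{1}{2}\sum(b_{ikj}-b_{ijk})^2\). The only slip is cosmetic: the vector fields for the divergence theorem should be \(X_k=\sum_{i,j}b_{ij}b_{ikj}\) and \(Y_j=\sum_{i,k}b_{ij}b_{ikk}\), not the \(X_i,Y_i\) you wrote. Your displayed integration-by-parts identities are nevertheless correct.
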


\begin{proof}
	Expanding the difference of derivatives yields
	\begin{equation}\label{initial expansion}
		\frac{1}{2}\sum_{i,j,k}(b_{ikj}-b_{ijk})^2
		=\sum_{i,j,k}b_{ijk}^2-\sum_{i,j,k}b_{ijk}b_{ikj}.
	\end{equation}
	It therefore suffices to prove \eqref{eq:component-weitzenbock}.
	Since \(M\) is closed, integration by parts followed by the Ricci identity gives
	\[
		\begin{aligned}
			\int_M\sum_{i,j,k}b_{ijk}b_{ikj}
			={}&-\int_M\sum_{i,j,k}b_{ij}b_{ikjk}\\
			={}&-\int_M\sum_{i,j,k}b_{ij}b_{ikkj}
			-\int_M\sum_{i,j,k,m}b_{ij}
			\bigl(b_{im}R_{mkjk}+b_{mk}R_{mijk}\bigr),
		\end{aligned}
	\]
	where \(b_{ikjk}=\nabla_k\nabla_jb_{ik}\).
	A second integration by parts yields
	\[
		-\int_M\sum_{i,j,k}b_{ij}b_{ikkj}
		=\int_M\sum_i\Big(\sum_j b_{ijj}\Big)
		\Big(\sum_k b_{ikk}\Big)
		=\int_M\sum_i\Big(\sum_j b_{ijj}\Big)^2.
	\]
	For the curvature term, choose an orthonormal frame with \(b_{ij}=b_{ii}\delta_{ij}\) at the point under consideration.
	In this frame,
	\[
		\begin{aligned}
			\sum_{i,j,k,m}b_{ij}
			\bigl(b_{im}R_{mkjk}+b_{mk}R_{mijk}\bigr)
			=\sum_{i,k}b_{ii}(b_{ii}-b_{kk})R_{ikik}
			=\sum_{i<k}R_{ikik}(b_{ii}-b_{kk})^2.
		\end{aligned}
	\]
	The second equality pairs the summands indexed by \((i,k)\) and \((k,i)\).
	This proves \eqref{eq:component-weitzenbock}, and \eqref{initial expansion} gives \eqref{eq:integrated-weitzenbock}.
\end{proof}

We now apply Lemma~\ref{lem:codazzi-hodge} to a divergence-free cubic polynomial in \(h\) with constant coefficients.
The Gauss equation expresses the curvature term in terms of the squared norm in \eqref{eq:cubic-square} and \(\psi^2\).
Expanding the covariant derivative terms and using Lemma~\ref{lem:defect-evolution} yields the following \(L^2\)-estimate.
\begin{proposition}\label{prop:hessian-cancellation}
	Let \(M^n\subset\sphere^{n+1}(1)\) be a closed minimal hypersurface.
	Suppose that \(S\) and \(f_3\) are constant and \(\frac{5}{3}n\leq S\leq2n\).
	Then the expression in \eqref{eq:cubic-square} satisfies
	\begin{equation}\label{eq:sharp-residual-bound}
		\begin{aligned}
			&\int_M\sum_i\left[f_{ii}\left(\lambda_i-\frac{f_3}{2S}\right)-\frac{f}{S}\lambda_i\right]^2\leq\left(\frac{S}{n}+\frac{f_3^2}{4S^2}\right)
			\left(\frac{3}{5}+\frac{t^2}{(3-4t)(1-t)}\right)\int_M\psi,
		\end{aligned}
	\end{equation}
	where \(f_{ij}\) and \(f\) are defined in \eqref{eq:yang-cheng-f}, and \(\psi\) is defined in \eqref{eq:psi-definition}.
\end{proposition}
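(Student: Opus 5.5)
The plan is to apply Lemma~\ref{lem:codazzi-hodge} to a divergence-free cubic polynomial in \(h\) with constant coefficients, and to arrange that its curvature term, via the Gauss equation, produces the squared norm in \eqref{eq:cubic-square}. The natural candidate is
\[
b_{ij}=\sum_{p,m}h_{ip}h_{pm}h_{mj}+\alpha\sum_p h_{ip}h_{pj}+\beta h_{ij}+\gamma\delta_{ij},
\]
with \(\alpha,\beta\) chosen so that \(b\) is divergence-free. Using the Codazzi equation,
\[
\sum_j b_{ijj}=\sum_j\bigl(3\lambda_j^2+\alpha(\lambda_i+\lambda_j)\bigr)h_{ijj}+\dots .
\]
The terms \(\sum_j\lambda_j^2h_{jji}\), \(\sum_j\lambda_jh_{jji}\) and \(\sum_jh_{jji}\) all vanish by \eqref{eq:derivative-traces}. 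Working in a principal frame, one checks which part of the divergence survives. If exact divergence-freeness fails, I would keep the divergence term and estimate \(\int_M|\mathrm{div}\,b|^2\) by \(A,B,C\) through \eqref{eq:AB} and \eqref{eq:C}. Integrating, the \(\Delta f\) contributions disappear.

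In a principal frame the curvature term is
\[
\sum_{i<j}(1+\lambda_i\lambda_j)(b_{ii}-b_{jj})^2,
\qquad b_{ii}-b_{jj}=(\lambda_i-\lambda_j)\bigl(\lambda_i^2+\lambda_i\lambda_j+\lambda_j^2+\alpha(\lambda_i+\lambda_j)+\beta\bigr).
\]
I would expand this sum using the power sums \(f_k\) together with \(\sum_i\lambda_i^a f_{ii}^b\). Since \(f_{ii}=\lambda_i^2-\frac{f_3}{S}\lambda_i-\frac{S}{n}\), the aim is to choose \(\alpha=-\frac{f_3}{2S}\)-type constants so that the quartic part in \(\lambda_i\lambda_j\) reorganizes into
\[
S\sum_i\Bigl(\lambda_i-\tfrac{f_3}{2S}\Bigr)^2f_{ii}^2-f^2 ,
\]
which is \(S\) times \eqref{eq:cubic-square}, plus terms controlled by \(f\), \(f^2\) and constants. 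Because \(f\) is \(\frac{tS}{3-4t}\bigl(\frac{S}{n}+\frac{f_3^2}{4S^2}\bigr)-\psi\), those terms become polynomials in \(\psi\) with nonnegative \(\psi^2\)-part or with a controllable sign.

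For the derivative side, the integrand \(\sum_{i,j,k}b_{ijk}b_{ikj}\) expands through \(b_{ijk}=\sum_{a}(\dots)h_{ija}\)-type expressions into sums of \(h_{ijk}^2\) weighted by quadratic polynomials in \(\lambda_i,\lambda_j,\lambda_k\). The symmetric parts reduce to the centered square in \eqref{eq:centered-square}, along with \(A\), \(B\), \(C\) and \(tS^2\). By Lemma~\ref{lem:defect-evolution}, \eqref{eq:AB} and \eqref{eq:C}, each of these integrates to an expression in \(\int_M f\), hence to a multiple of \(\int_M\psi\) plus constants. The constants must cancel identically, as they do in the isoparametric case \(\psi\equiv0\) with \(h^3=3h\).

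The main obstacle is twofold: choosing the coefficients so that the leftover fourth-degree weighted terms have a definite sign, and bounding them by the centered square. This is where the factor \(\frac{3}{5}\) (coming from \(\frac{3S(3-4t)}{5}\int\psi\)) and the factor \(\frac{t^2}{(3-4t)(1-t)}\) (coming from \(tS^2/(1-t)\) in \eqref{eq:Aminus2B}) should arise. I would first try to bound the non-square weight pointwise by a multiple of \(\bigl(\lambda_i+\lambda_j+\lambda_k-\frac{3f_3}{2S}\bigr)^2\), discarding nonnegative pieces. The bound \(\psi\le\frac{tS}{3-4t}\bigl(\frac{S}{n}+\frac{f_3^2}{4S^2}\bigr)\) would handle any \(\psi^2\) term with the wrong sign.
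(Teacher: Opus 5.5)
Your framework matches the paper's: Lemma~\ref{lem:codazzi-hodge} applied to a constant-coefficient cubic in \(h\), the Gauss equation, Lemma~\ref{lem:defect-evolution}, and the upper bound for \(\psi\). Divergence-freeness is automatic for every choice of constants, because \(b_{ijl}=\bigl(\lambda_i^2+\lambda_i\lambda_j+\lambda_j^2+\alpha(\lambda_i+\lambda_j)+\beta\bigr)h_{ijl}\), and the Codazzi symmetry together with \eqref{eq:derivative-traces} kills each piece of \(\sum_j b_{ijj}\).

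The gap is on the derivative side. The weights in \(\sum_{i,j,k}b_{ijk}b_{ikj}\) are quartic, not quadratic. Take \(\alpha=-\frac{3f_3}{2S}\), not \(-\frac{f_3}{2S}\); this is what centers the weights at \(\frac{f_3}{2S}\). With \(x_i=\lambda_i-\frac{f_3}{2S}\) one has \(b_{ijk}=P(x_i,x_j)h_{ijk}\), where \(P(x,y)=x^2+xy+y^2-3\kappa\) and \(\kappa\) is a constant determined by your \(\beta\). The integrand is then \(\sum_{i,j,k}P(x_i,x_j)P(x_i,x_k)h_{ijk}^2\). Such sums are not expressible through \(A\), \(B\), \(C\), \(tS^2\), which carry only quadratic weights, and \eqref{eq:AB} gives no access to them. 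The missing idea is the exact identity
\[
\frac13\bigl(P(x,y)P(x,z)+P(y,z)P(y,x)+P(z,x)P(z,y)\bigr)=\frac14\bigl(x^2+y^2+z^2-6\kappa\bigr)^2+\frac1{12}(x+y+z)^4-\kappa(x+y+z)^2 .
\]
Since \(h_{ijk}^2\) is symmetric, this writes the derivative side as a nonnegative quantity minus \(\kappa\) times the centered square of \eqref{eq:centered-square}. That square integrates to \(\kappa\,\frac{3S(3-4t)}{5}\int_M\psi\) by Lemma~\ref{lem:defect-evolution}, with no volume term. Your ``bound the non-square weight pointwise'' is exactly this step, and the proposal leaves it open.

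The curvature side is also not as you predict, and this is where the coefficients get fixed. Remove the trace of \(b\), which does not change \(b_{ii}-b_{jj}\). Then \(\sum_i\lambda_i=0\) gives \(\sum_{i<j}(1+\lambda_i\lambda_j)(b_{ii}-b_{jj})^2=n|b|^2-\langle b,h\rangle^2\). So the \(\lambda_i\lambda_j\)-part is only \(-\langle b,h\rangle^2\), and \eqref{eq:cubic-square} enters with coefficient \(n\), not \(S\). Write \(b=T+\nu h\), with \(T\) the tensor in \eqref{eq:cubic-square} and \(\nu=\frac{f}{S}+\mu\) for a constant \(\mu\). The curvature term is then \(n|T|^2-tS^2\nu^2\).

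The constants do not cancel by themselves. Unless \(\mu=-\frac{f+\psi}{S}\), which is constant because \(f+\psi\) is, the term \(tS^2\int_M\nu^2\) contains a positive multiple of \(\Vol(M)\) that no bound by \(\int_M\psi\) can absorb. That choice is the paper's \(r_{ij}\), with \(\langle r,h\rangle=-\psi\). It gives \(\kappa=\frac{1-t}{3-4t}\bigl(\frac{S}{n}+\frac{f_3^2}{4S^2}\bigr)\) and
\[
n\int_M|T|^2\le t\int_M\psi^2+\frac{3n}{5}\Bigl(\frac{S}{n}+\frac{f_3^2}{4S^2}\Bigr)\int_M\psi .
\]
Applying \(\psi\le\frac{tS}{3-4t}\bigl(\frac{S}{n}+\frac{f_3^2}{4S^2}\bigr)\) and dividing by \(n=S(1-t)\) finishes the proof. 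In particular, the factor \(\frac{t^2}{(3-4t)(1-t)}\) comes from the \(\psi^2\) term, not from \eqref{eq:Aminus2B}.
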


\begin{proof}
	Define the symmetric two-tensor \((r_{ij})\) by subtracting \(\frac{\psi}{S}h_{ij}\) from the tensor in \eqref{eq:cubic-square}:
	\begin{equation}\label{eq:k-polynomial}
		\begin{aligned}
			r_{ij}
			&:=\left(\sum_p f_{ip}h_{pj}-\frac{f_3}{2S}f_{ij}-\frac{f}{S}h_{ij}\right)-\frac{\psi}{S}h_{ij}\\
			&=\sum_p f_{ip}h_{pj}-\frac{f_3}{2S}f_{ij}-\frac{f+\psi}{S}h_{ij}\\
			&=\sum_{p,q}h_{ip}h_{pq}h_{qj}-\frac{3f_3}{2S}\sum_p h_{ip}h_{pj}+\frac{3\bigl((2-3t)f_3^2-4S^2\bigr)}{4S^2(3-4t)}h_{ij}+\frac{f_3}{2n}\delta_{ij}.
		\end{aligned}
	\end{equation}
	By \eqref{eq:psi-definition}, \(f+\psi=\frac{tS}{3-4t}\left(\frac{S}{n}+\frac{f_3^2}{4S^2}\right)\) is constant on \(M\).
	Thus \((r_{ij})\) is a cubic polynomial in \(h\) with constant coefficients.
	Equations~\eqref{eq:f-orthogonality} and~\eqref{eq:k-polynomial} yield
	\begin{equation}\label{eq:r-orthogonality}
		\begin{aligned}
			\sum_i r_{ii}&=0,\qquad
			\sum_{i,j}r_{ij}h_{ij}=-\psi,\\
			\sum_{i,j}r_{ij}^2
			&=\sum_i\left[f_{ii}\left(\lambda_i-\frac{f_3}{2S}\right)
			-\frac{f}{S}\lambda_i\right]^2+\frac{\psi^2}{S}.
		\end{aligned}
	\end{equation}

	At a fixed point, choose a local orthonormal frame that is geodesic at that point and diagonalizes \(h\) there.
	The product rule gives
	\[
		\begin{aligned}
			\nabla_l\left(\sum_{p,q}h_{ip}h_{pq}h_{qj}\right)
			={}&\sum_{p,q}h_{ipl}h_{pq}h_{qj}
			+\sum_{p,q}h_{ip}h_{pql}h_{qj}
			+\sum_{p,q}h_{ip}h_{pq}h_{qjl}\\
			={}&(\lambda_i^2+\lambda_i\lambda_j+\lambda_j^2)h_{ijl}.
		\end{aligned}
	\]
	Differentiating \eqref{eq:k-polynomial} therefore yields
	\begin{equation}\label{eq:k-derivative}
		r_{ijl}
		=\left(\lambda_i^2+\lambda_i\lambda_j+\lambda_j^2
		-\frac{3f_3}{2S}(\lambda_i+\lambda_j)
		+\frac{3\bigl((2-3t)f_3^2-4S^2\bigr)}{4S^2(3-4t)}\right)h_{ijl}.
	\end{equation}
	By \eqref{eq:derivative-traces} and the symmetry of \(h_{ijk}\), the divergence is
	\begin{equation}\label{eq:r-divergence}
		\begin{aligned}
			\sum_j r_{ijj}
			={}&\left(\lambda_i^2-\frac{3f_3}{2S}\lambda_i
			+\frac{3\bigl((2-3t)f_3^2-4S^2\bigr)}{4S^2(3-4t)}\right)\sum_j h_{ijj}\\
			&+\left(\lambda_i-\frac{3f_3}{2S}\right)\sum_j\lambda_jh_{ijj}
			+\sum_j\lambda_j^2h_{ijj}=0.
		\end{aligned}
	\end{equation}
	The symmetry of \(h_{ijk}\) and \eqref{eq:k-derivative} yield
	\begin{equation}\label{eq:r-codazzi-difference}
		r_{ikj}-r_{ijk}
		=(\lambda_k-\lambda_j)
		\left(\lambda_i+\lambda_j+\lambda_k-\frac{3f_3}{2S}\right)h_{ijk}.
	\end{equation}
	Thus \((r_{ij})\) is divergence-free, although it need not satisfy the Codazzi equation.

	We first evaluate the curvature term in Lemma~\ref{lem:codazzi-hodge}.
	The chosen frame also diagonalizes \((r_{ij})\), since \((r_{ij})\) is a polynomial in \(h\).
	The Gauss equation and \eqref{eq:r-orthogonality} now give
	\begin{equation}\label{eq:r-curvature}
		\begin{aligned}
			\sum_{i<j}R_{ijij}(r_{ii}-r_{jj})^2
			&=n\sum_{i,j}r_{ij}^2-\psi^2\\
			&=n\sum_i\left[f_{ii}\left(\lambda_i-\frac{f_3}{2S}\right)
			-\frac{f}{S}\lambda_i\right]^2-t\psi^2.
		\end{aligned}
	\end{equation}
	By \eqref{initial expansion}, \eqref{eq:r-divergence}, and \eqref{eq:r-curvature}, the identity \eqref{eq:component-weitzenbock} becomes
	\begin{equation}\label{eq:r-weitzenbock}
		\begin{aligned}
			&\int_M\left\{\sum\limits_{i,j,k}r_{ijk}^2-\frac{1}{2}\sum\limits_{i,j,k}(r_{ikj}-r_{ijk})^2\right\}\\
			&\quad+n\int_M\sum_i\left[f_{ii}\left(\lambda_i-\frac{f_3}{2S}\right)
			-\frac{f}{S}\lambda_i\right]^2
			=t\int_M\psi^2.
		\end{aligned}
	\end{equation}
	For the derivative terms, set \(x_i=\lambda_i-\frac{f_3}{2S}\) and \(\beta=\frac{1-t}{3-4t}\left(\frac{S}{n}+\frac{f_3^2}{4S^2}\right)\).
	Writing \(P(x,y)=x^2+xy+y^2-3\beta\), we can express \eqref{eq:k-derivative} as \(r_{ijk}=P(x_i,x_j)h_{ijk}\).
	Likewise, \eqref{eq:r-codazzi-difference} reads \(r_{ikj}-r_{ijk}=(x_k-x_j)(x_i+x_j+x_k)h_{ijk}\).
	By \eqref{initial expansion}, the derivative expression in \eqref{eq:r-weitzenbock} is therefore \(\sum\limits_{i,j,k}P(x_i,x_j)P(x_i,x_k)h_{ijk}^2\).
	For real \(x,y,z\), expanding the products gives
	\[
		\begin{aligned}
			&\frac{1}{3}\bigl(P(x,y)P(x,z)+P(y,z)P(y,x)+P(z,x)P(z,y)\bigr)\\
			=&\frac{1}{4}(x^2+y^2+z^2-6\beta)^2
			+\frac{1}{12}(x+y+z)^4-\beta(x+y+z)^2.
		\end{aligned}
	\]
	Since \(h_{ijk}^2\) is symmetric in \(i,j,k\), we may replace \(P(x_i,x_j)P(x_i,x_k)\) in the sum by its average over the three cyclic permutations.
	Moreover, \eqref{eq:yang-cheng-f} implies
	\[
		\begin{aligned}
			x_i^2+x_j^2+x_k^2-6\beta
			=f_{ii}+f_{jj}+f_{kk}
			+\frac{3(1-2t)}{3-4t}\left(\frac{S}{n}+\frac{f_3^2}{4S^2}\right).
		\end{aligned}
	\]
	Substitution in the cyclic average yields
	\begin{equation}\label{eq:k-derivative-product}
		\begin{aligned}
			\sum\limits_{i,j,k}r_{ijk}^2-\frac{1}{2}\sum\limits_{i,j,k}(r_{ikj}-r_{ijk})^2
			={}&\frac{1}{4}\sum_{i,j,k}
			\left(f_{ii}+f_{jj}+f_{kk}
			+\frac{3(1-2t)}{3-4t}\left(\frac{S}{n}+\frac{f_3^2}{4S^2}\right)\right)^2h_{ijk}^2\\
			&+\frac{1}{12}\sum_{i,j,k}
			\left(\lambda_i+\lambda_j+\lambda_k-\frac{3f_3}{2S}\right)^4h_{ijk}^2\\
			&-\frac{1-t}{3-4t}\left(\frac{S}{n}+\frac{f_3^2}{4S^2}\right)\sum_{i,j,k}
			\left(\lambda_i+\lambda_j+\lambda_k-\frac{3f_3}{2S}\right)^2h_{ijk}^2.
		\end{aligned}
	\end{equation}
	Substituting \eqref{eq:k-derivative-product} into \eqref{eq:r-weitzenbock} and using \eqref{eq:psi-range-integral}, we obtain the identity
	\begin{equation}\label{eq:exact-hessian-cancellation}
		\begin{aligned}
			&n\int_M\sum_i\left[f_{ii}\left(\lambda_i-\frac{f_3}{2S}\right)-\frac{f}{S}\lambda_i\right]^2\\
			+&\frac{1}{4}\int_M\sum_{i,j,k}
			\left(f_{ii}+f_{jj}+f_{kk}
			+\frac{3(1-2t)}{3-4t}\left(\frac{S}{n}+\frac{f_3^2}{4S^2}\right)\right)^2h_{ijk}^2\\
			+&\frac{1}{12}\int_M\sum_{i,j,k}
			\left(\lambda_i+\lambda_j+\lambda_k-\frac{3f_3}{2S}\right)^4h_{ijk}^2\\
			=&t\int_M\psi^2+\frac{3n}{5}\left(\frac{S}{n}+\frac{f_3^2}{4S^2}\right)\int_M\psi.
		\end{aligned}
	\end{equation}
	The second and third terms on the left-hand side of \eqref{eq:exact-hessian-cancellation} are nonnegative.
	Discarding them and applying the upper bound for \(\psi\) in \eqref{eq:psi-range-integral} gives
	\[
		\begin{aligned}
			n\int_M\sum_i\left[f_{ii}\left(\lambda_i-\frac{f_3}{2S}\right)-\frac{f}{S}\lambda_i\right]^2
			&\leq t\int_M\psi^2+\frac{3n}{5}\left(\frac{S}{n}+\frac{f_3^2}{4S^2}\right)\int_M\psi\\
			&\leq\left(\frac{t^2S}{3-4t}+\frac{3n}{5}\right)
			\left(\frac{S}{n}+\frac{f_3^2}{4S^2}\right)\int_M\psi.
		\end{aligned}
	\]
	Division by \(n=S(1-t)>0\) proves \eqref{eq:sharp-residual-bound}.
\end{proof}

\section{Global fourth-order estimate}
\label{sec:fourth-order-comparison}

We now integrate \eqref{eq:u-norm} and \eqref{eq:component-square-expansion} over \(M\) and compare the resulting expressions.
The Laplacian term has zero integral because \(M\) is closed.
Lemma~\ref{lem:defect-evolution}, Proposition~\ref{prop:hessian-cancellation}, and Young's inequality bound the integral of the left-hand side of \eqref{eq:cubic-mixed-identity}.
This comparison yields an inequality involving only \(\int_M f\), \(\int_M\psi\), and \(\Vol(M)\), with coefficients determined by the constants \(n\), \(S\), and \(f_3\).

\begin{lemma}\label{lem:integral-comparison}
	Let \(M^n\subset\sphere^{n+1}(1)\) be a closed minimal hypersurface such that \(S\) and \(f_3\) are constant.
	Suppose that \(\frac{5}{3}n\leq S\leq2n\), and define
	\begin{equation}\label{eq:Theta-definition}
		\begin{aligned}
			\Theta:=\int_M\Bigg\{&\Bigg[
			\frac{S(nf_3^2+2S^3)\bigl((n+4)(3-4t)+20t\bigr)}{5n(nf_3^2+4S^3)}
			-\frac{(n+4)S(nf_3^2+2S^3)^2}{n(nf_3^2+4S^3)^2} \\
			&-\frac{t}{3-4t}\left(\frac{S}{n}+\frac{f_3^2}{4S^2}\right)
			-\frac{(n+4)S}{4n}+\frac{5f_3^2}{16S^2}\Bigg]f\\
			&-\frac{3}{2}\left(\frac{3}{5}+\frac{t^2}{(3-4t)(1-t)}\right)
			\left(\frac{S}{n}+\frac{f_3^2}{4S^2}\right)\psi\\
			&+\frac{t(nf_3^2+2S^3)\bigl(n(9n+16)f_3^2+16(n+4)S^3\bigr)}{20n^2(nf_3^2+4S^3)}\\
			&-\frac{2(n+4)S^3(nf_3^2+2S^3)^2}{3n(nf_3^2+4S^3)^2}
			\left(t(2t-1)+\frac{6t^2}{n(n+4)}\right)\\
			&-\frac{f_3^2}{32}\left(1+\frac{6}{n}+\frac{4}{n(n+2)}
			+\frac{2f_3^2}{S^3}\right)\Bigg\},
		\end{aligned}
	\end{equation}
	where \(f\) and \(\psi\) are defined in \eqref{eq:yang-cheng-f} and \eqref{eq:psi-definition}, respectively.
	Then
	\begin{equation}\label{eq:Theta-sign}
		\Theta\leq0.
	\end{equation}
\end{lemma}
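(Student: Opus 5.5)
The plan is to integrate the pointwise identity \eqref{eq:u-norm} and the pointwise lower bound \eqref{eq:component-square-expansion} over \(M\), subtract them, and then remove the only term of indefinite sign, namely the integral of \(\sum_i\lambda_i^2f_{ii}^2-\frac{f_3}{2S}\sum_i f_{ii}\lambda_i^3\). Integrating \eqref{eq:u-norm} gives \(\int_M\sum u_{ijkl}^2=\int_M\{t(2t-1)S^3+\frac32Sf+\frac{3t^2S^3}{2n}\}\). In \eqref{eq:component-square-expansion} the term \(-\frac15\Delta f\) integrates to zero. The resulting inequality has the form \(0\geq\int_M(\text{linear in }f)+\text{const}\cdot\Vol(M)-c\int_M\{\sum_i\lambda_i^2f_{ii}^2-\frac{f_3}{2S}\sum_if_{ii}\lambda_i^3\}\), where \(c=\frac{3n(nf_3^2+4S^3)^2}{2(n+4)(nf_3^2+2S^3)^2}\). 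I will then multiply through by \(\frac{2(n+4)(nf_3^2+2S^3)^2}{3n(nf_3^2+4S^3)^2}\), which is the normalization that makes the coefficient of the mixed cubic term equal to \(-1\). This normalization is consistent with the constant terms of \(\Theta\): the \(\frac{f_3^2}{32}(\cdots)\) bracket appears with coefficient one, and \(-\frac{2(n+4)S^3(\cdots)^2}{3n(\cdots)^2}(t(2t-1)+\frac{6t^2}{n(n+4)})\) arises from combining \(t(2t-1)S^3+\frac{3t^2S^3}{2n}\) with \(\frac{3t^2S^3}{2(n+4)}\).

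Next I bound the mixed cubic term using \eqref{eq:cubic-mixed-identity}. Write \(Q=\sum_i[f_{ii}(\lambda_i-\frac{f_3}{2S})-\frac fS\lambda_i]^2\). The left side of \eqref{eq:cubic-mixed-identity} equals \(Q+\frac{f^2}{S}+\frac{f_3}{2S}\sum_i[\cdots]f_{ii}-\frac{f_3^2}{2S^2}f\). For the cross term I apply Cauchy--Schwarz and Young's inequality:
\[
\frac{f_3}{2S}\sum_i[\cdots]f_{ii}\leq\frac12 Q+\frac{f_3^2}{8S^2}f,
\]
using \(\sum_if_{ii}^2\leq f\). The total is then at most \(\frac32Q+\frac{f^2}{S}-\frac{3f_3^2}{8S^2}f\). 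The Young weight may have to be tuned so that the coefficients come out to \(\frac32\) on \(Q\) and \(\frac{5f_3^2}{16S^2}\) on \(f\); this should be fixed by matching against \(\Theta\). The \(\frac32\) in front of the \(\psi\)-term of \(\Theta\) indicates that \(Q\) enters with coefficient \(\frac32\), and \(\int_M Q\) is then controlled by Proposition~\ref{prop:hessian-cancellation}, which produces exactly the \(\frac32(\frac35+\frac{t^2}{(3-4t)(1-t)})(\frac Sn+\frac{f_3^2}{4S^2})\psi\) term.

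The quadratic term \(\frac{f^2}{S}\) I linearize using the pointwise bound from Lemma~\ref{lem:defect-evolution}: \(0\leq f\leq f+\psi=\frac{tS}{3-4t}(\frac Sn+\frac{f_3^2}{4S^2})\). This gives \(\frac{f^2}{S}\leq\frac{t}{3-4t}(\frac Sn+\frac{f_3^2}{4S^2})f\), which accounts for the corresponding negative coefficient of \(f\) in \(\Theta\). The term \(\frac{(n+4)S}{4n}f\) and the \(\frac{2f}{S^2}\) part of the \(\frac{f_3^2}{32}\) bracket carry over from \eqref{eq:component-square-expansion}. The \(\frac{f_3^2}{16S^2}f\) contribution from that bracket combines with the Young term to give the \(\frac{5f_3^2}{16S^2}\) coefficient. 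The positive \(f\)-coefficient \(\frac{S(nf_3^2+2S^3)((n+4)(3-4t)+20t)}{5n(nf_3^2+4S^3)}\) comes from the first brace after normalization. The term \(\frac32Sf\) from \(\int\sum u^2\), once normalized, supplies \(-\frac{(n+4)S(nf_3^2+2S^3)^2}{n(nf_3^2+4S^3)^2}f\).

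The main obstacle is bookkeeping. Every coefficient in \eqref{eq:Theta-definition} has to be reproduced exactly after normalization, and the Young splitting has to be chosen consistently with the \(\frac32\) factor on \(\psi\) and the \(\frac{5f_3^2}{16S^2}\) coefficient on \(f\). If my first Young choice does not reproduce these coefficients, I will instead split via the orthogonal decomposition \eqref{eq:cubic-square} and apply Cauchy--Schwarz with a parameter, then solve for the parameter by matching. All the inequalities used go in the direction of decreasing the right-hand side, since they bound the negatively weighted term from above. Collecting terms then gives \(\Theta\leq0\).
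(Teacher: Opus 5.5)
Your proposal is correct and follows the paper's proof essentially step for step. You integrate \eqref{eq:u-norm} and \eqref{eq:component-square-expansion}, and bound the mixed cubic term through \eqref{eq:cubic-mixed-identity}, Young's inequality with weight $\frac12$, the bound $f\leq f+\psi$, and Proposition~\ref{prop:hessian-cancellation}; multiplying by $\frac{2(n+4)(nf_3^2+2S^3)^2}{3n(nf_3^2+4S^3)^2}$ then gives $\Theta\leq0$. Your first Young choice already reproduces the exact coefficients, since $\frac{3}{8}-\frac{1}{16}=\frac{5}{16}$ on $\frac{f_3^2}{S^2}f$ and $1+\frac12=\frac32$ on the squared residual, so no tuning is needed.
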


\begin{proof}
	We first estimate the mixed term on the right-hand side of \eqref{eq:cubic-mixed-identity}.
	Young's inequality and the identity \(\sum\limits_{i,j}f_{ij}^2=f\) yield
	\begin{equation}\label{eq:Young}
		\begin{aligned}
			\frac{f_3}{2S}\sum_i
			\left[f_{ii}\left(\lambda_i-\frac{f_3}{2S}\right)
			-\frac{f}{S}\lambda_i\right]f_{ii}
			\leq\frac{1}{2}\sum_i
			\left[f_{ii}\left(\lambda_i-\frac{f_3}{2S}\right)
			-\frac{f}{S}\lambda_i\right]^2
			+\frac{f_3^2}{8S^2}f.
		\end{aligned}
	\end{equation}
	Since \(f\geq0\) and \(\psi\geq0\), the definition \eqref{eq:psi-definition} also gives
	\begin{equation}\label{psif}
		\int_M f^2\leq\frac{tS}{3-4t}
		\left(\frac{S}{n}+\frac{f_3^2}{4S^2}\right)\int_M f.
	\end{equation}
	Integrating \eqref{eq:cubic-mixed-identity} and applying \eqref{eq:Young}, \eqref{psif}, and Proposition~\ref{prop:hessian-cancellation}, we obtain
	\begin{equation}\label{eq:scalar-integral-upper}
		\begin{aligned}
			\int_M\Big(\sum_i\lambda_i^2f_{ii}^2
			-\frac{f_3}{2S}\sum_i f_{ii}\lambda_i^3\Big)
			\leq&\frac{3}{2}\left(\frac{3}{5}+\frac{t^2}{(3-4t)(1-t)}\right)
			\left(\frac{S}{n}+\frac{f_3^2}{4S^2}\right)\int_M\psi\\
			&+\left[\frac{t}{3-4t}\left(\frac{S}{n}+\frac{f_3^2}{4S^2}\right)
			-\frac{3f_3^2}{8S^2}\right]\int_M f.
		\end{aligned}
	\end{equation}
	Integrating \eqref{eq:u-norm} over \(M\) yields
	\begin{equation}\label{eq:integral-u-F}
		\begin{aligned}
			\int_M\sum_{i,j,k,l}u_{ijkl}^2
			&=\frac{3}{2}S\int_M f
			+\int_M\left(t(2t-1)S^3+\frac{3t^2S^3}{2n}\right).
		\end{aligned}
	\end{equation}
	Finally, integrating \eqref{eq:component-square-expansion} over $M$ and using \eqref{eq:scalar-integral-upper} and \eqref{eq:integral-u-F}, we obtain \eqref{eq:Theta-sign}.
\end{proof}

\section{Algebraic estimates}
\label{sec:algebraic-estimates}
For \(\Theta\) defined in \eqref{eq:Theta-definition}, Lemma~\ref{lem:integral-comparison} gives
\[
	\Theta\leq0.
\]
We first record the parameter bounds needed to estimate \(\Theta\).

Since the principal curvatures satisfy \(\sum\limits_i\lambda_i=0\) and \(\sum\limits_i\lambda_i^2=S\), Okumura's inequality \cite{okumura_hypersurfaces_1974} yields
\[
	|f_3|=\Big|\sum_i\lambda_i^3\Big|
	\leq\frac{n-2}{\sqrt{n(n-1)}}S^{\frac{3}{2}}.
\]
Consequently,
\[
	\frac{f_3^2}{S^3}\leq\frac{(n-2)^2}{n(n-1)}\leq\frac{n-2}{n}.
\]
Throughout this section, the parameters satisfy
\begin{equation}\label{eq:certificate-domain}
	n\geq5,\qquad \frac{2}{5}\leq t\leq\frac{1}{2},\qquad
	0\leq\frac{f_3^2}{S^3}\leq\frac{n-2}{n}.
\end{equation}
The definition of \(\psi\) in \eqref{eq:psi-definition} and the bounds in \eqref{eq:psi-range-integral} give
\begin{equation}\label{eq:integral-f-psi}
	\begin{aligned}
		\int_M f&=\frac{tS}{3-4t}\left(\frac{S}{n}+\frac{f_3^2}{4S^2}\right)\Vol(M)-\int_M\psi,\\
		0&\leq\int_M\psi\leq\frac{tS}{3-4t}\left(\frac{S}{n}+\frac{f_3^2}{4S^2}\right)\Vol(M).
	\end{aligned}
\end{equation}
Substituting \eqref{eq:integral-f-psi} into \eqref{eq:Theta-definition} yields
\begin{equation}\label{eq:Theta-affine}
	\Theta=\Theta_0\Vol(M)+\Theta_1\int_M\psi,
\end{equation}
where \(\Theta_0\) and \(\Theta_1\) are constant on \(M\).
Their explicit expressions will be given in the proof of Lemma~\ref{lem:algebraic-positivity}. We shall prove that \(\Theta_0\geq0\), with equality exactly when \(t=\frac{1}{2}\) and \(f_3=0\), and that \(\Theta_1>0\).
These inequalities, together with \eqref{eq:Theta-sign} and Proposition~\ref{prop:hessian-cancellation}, yield the rigidity statement in Proposition~\ref{prop:closed-interval-rigidity}.

\begin{lemma}\label{lem:cubic-endpoint}
	Let \(p(s)=a_0+a_1s+a_2s^2-a_3s^3\), where \(a_0,a_1,a_3>0\) and \(a_2\in\mathbb R\).
	For every \(L>0\), the minimum of \(p\) on \([0,L]\) is attained at an endpoint.
\end{lemma}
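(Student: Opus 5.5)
The approach is to show that \(p\) has no interior local minimum in \((0,L)\), using the sign structure of \(p'\); the conclusion then follows from the extreme value theorem. Since \(p\) is continuous on the compact interval \([0,L]\), its minimum is attained at some point \(s_*\). If \(s_*\in(0,L)\), then \(s_*\) is an interior local minimum. In that case \(p'(s_*)=0\) and \(p''(s_*)\geq0\). We will show that this combination is impossible.

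Compute
\[
p'(s)=a_1+2a_2s-3a_3s^2,\qquad p''(s)=2a_2-6a_3s.
\]
Because \(a_3>0\), \(p'\) is a downward-opening quadratic. Its value at zero is \(p'(0)=a_1>0\). The product of its roots is \(-a_1/(3a_3)<0\), so \(p'\) has exactly one positive root \(s_+\). Moreover, \(p'>0\) on \([0,s_+)\) and \(p'<0\) on \((s_+,\infty)\).

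Hence on \([0,\infty)\) the function \(p\) increases up to \(s_+\) and decreases afterwards. Its only critical point in \((0,\infty)\) is therefore a strict local maximum. Equivalently, \(p''(s_+)<0\): the critical points of the downward quadratic \(p'\) lie on opposite sides of its vertex, and \(s_+\) is the larger root. So no interior point of \((0,L)\) can be a local minimum, and the minimum on \([0,L]\) is attained at \(0\) or at \(L\).

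In fact \(\min_{[0,L]}p=\min\{p(0),p(L)\}=\min\{a_0,p(L)\}\). The hypothesis \(a_0>0\) is not needed for the endpoint statement itself. It will matter only when the lemma is applied to conclude positivity, via \(p(0)=a_0>0\), so that only \(p(L)\) remains to be checked.

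No step here is a real obstacle. The only point needing care is the sign argument for the roots of \(p'\): we must use \(a_1>0\) and \(a_3>0\) to exclude a positive local minimum, which would exist if \(p'\) had two positive roots. Once \(p'(0)>0\) and the negative leading coefficient are recorded, this is immediate.
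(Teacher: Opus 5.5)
Your proof is correct and follows essentially the same route as the paper. Both arguments use \(p'(0)=a_1>0\), the negative leading coefficient of \(p'\), and the negative product of roots \(-a_1/(3a_3)\) to show that \(p'\) has a unique positive zero, which is a local maximum of \(p\), so \(p\) has no interior minimum on \((0,L)\); your observation that \(a_0>0\) is not needed for the endpoint conclusion itself is also accurate.
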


\begin{proof}
	The discriminant of \(p'\) is \(4a_2^2+12a_1a_3>0\), and the product of its two zeros is
	\[
		-\frac{a_1}{3a_3}<0.
	\]
 Thus \(p'\) has a unique positive zero. Since \(p'(0)>0\) and the leading coefficient of \(p'\) is negative, the corresponding critical point of \(p\) is a local maximum.
Hence \(p\) has no interior minimum on \((0,L)\), and
\[
\min_{0\leq s\leq L}p(s)=\min\{p(0),p(L)\}.
\]
\end{proof}

\begin{lemma}\label{lem:algebraic-positivity}
	For parameters satisfying \eqref{eq:certificate-domain}, one has
	\[
		\Theta_0\geq0,\qquad \Theta_1>0.
	\]
	Moreover, \(\Theta_0=0\) if and only if \(t=\frac{1}{2}\) and \(f_3=0\).
\end{lemma}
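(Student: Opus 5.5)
We first make \(\Theta_0\) and \(\Theta_1\) explicit. Substituting \(\int_M f=K\Vol(M)-\int_M\psi\), with \(K:=\frac{tS}{3-4t}\bigl(\frac{S}{n}+\frac{f_3^2}{4S^2}\bigr)\), into \eqref{eq:Theta-definition} gives the following. If \(c_f\) denotes the bracketed coefficient of \(f\), then \(\Theta_1=-c_f-\frac32\bigl(\frac35+\frac{t^2}{(3-4t)(1-t)}\bigr)\bigl(\frac{S}{n}+\frac{f_3^2}{4S^2}\bigr)\) and \(\Theta_0=c_fK+(\text{the constant terms})\). Everything is homogeneous: \(f\) and \(\psi\) scale like \(S^2\), and \(f_3^2\) like \(S^3\). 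We therefore divide by the appropriate power of \(S\) and use \(S=n/(1-t)\). This leaves rational functions of three variables: \(n\geq5\), \(t\in[\frac25,\frac12]\), and \(s:=nf_3^2/S^3\in[0,n-2]\) (note \(s\) is dimensionless after multiplying by \(n\)). We then clear the positive denominators \((3-4t)\), \((1-t)\), \(n\), \((nf_3^2+4S^3)^2\), and so on. In terms of \(s\), the factors \(nf_3^2+2S^3\) and \(nf_3^2+4S^3\) become \(S^3(s+2)\) and \(S^3(s+4)\). After clearing denominators, \(\Theta_0\) and \(\Theta_1\) become (up to positive factors) polynomials in \(s\) whose coefficients are polynomials in \(n\) and \(t\).

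Next, we check the expected structure in \(s\). After multiplying by \((s+4)^2\), we expect a polynomial of degree three in \(s\), because the \(f_3^4/S^3\) terms contribute \(s^2(s+4)^2\cdot\frac1{n^2}\) etc. We aim to put it in the form \(p(s)=a_0+a_1s+a_2s^2-a_3s^3\) with \(a_0,a_1,a_3>0\). Then Lemma~\ref{lem:cubic-endpoint} reduces positivity to the two endpoints \(s=0\) and \(s=L\), with \(L=n-2\) or the coarser bound \(L=(n-2)^2/(n-1)\). If the degree turns out to be higher, the plan is to bound the top-order terms crudely, using \(s\leq n-2\), to bring the expression back to that shape. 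At each endpoint we get a two-variable inequality in \((n,t)\). We verify it by writing it as a polynomial in \(t\) on \([\frac25,\frac12]\) (substituting \(t=\frac12-\tau\) with \(\tau\in[0,\frac1{10}]\)) with coefficients polynomial in \(n\). We then check sign and monotonicity using coarse numerical bounds, namely \(\frac1n\leq\frac15\), \((n+1)(n+3)\leq c\,n^2\) for \(n\geq5\), and the like, which is exactly the kind of adjustment mentioned in Remark~\ref{rem:low-dimensions}.

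For the equality statement, at \(s=0\) the expression \(\Theta_0\) should factor as \((1-2t)\) times a positive quantity. This is consistent with \eqref{eq:u-norm}, where the term \(t(2t-1)S^3\) vanishes exactly at \(t=\frac12\). At \(t=\frac12\) with \(s>0\), the coefficient \(a_1\) (or the endpoint value) should be strictly positive. Hence \(\Theta_0\geq0\), with \(\Theta_0=0\) precisely when \(t=\frac12\) and \(f_3=0\). For \(\Theta_1\) the claim is strict positivity on the whole domain, and we do not expect a degenerate case there.

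The main obstacle is the sheer bookkeeping. We must verify that the cleared polynomials really have \(a_0,a_1,a_3>0\) across the full \((n,t)\) range, and we need endpoint estimates at \(s=L\) that are sharp enough. The value \(\Theta_1\) at \(s=L\) with \(t\) near \(\frac25\) is the likely tight spot. If it fails, we would first try the sharper Okumura bound \(s\leq(n-2)^2/(n-1)\) in place of \(n-2\).
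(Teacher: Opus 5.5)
For \(\Theta_1\) your plan matches the paper's proof. After clearing \(80n(1-t)(3-4t)\bigl(\frac{nf_3^2}{S^3}+4\bigr)^2\) one gets a cubic in \(\frac{f_3^2}{S^3}\) with positive constant and linear coefficients and negative leading coefficient. Lemma~\ref{lem:cubic-endpoint} then reduces everything to the two endpoints, which are checked with coarse bounds in \(n\) and \(1-2t\). For \(\Theta_0\) your route differs from the paper's.

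With \(t\) as a parameter, the cleared \(\Theta_0\) is a quartic in \(s\), not a cubic: the \(f_3^4\) terms get multiplied by \((s+4)^2\), as your own remark shows. Its constant term vanishes exactly at \(t=\frac12\). The paper sidesteps this with the splitting \eqref{eq:Theta-constant}. It writes \(\Theta_0/S^3\) as the value at \(t=\frac12\), which is \(\frac{f_3^2}{S^3}\) times the cubic \(-P_e\) over a positive denominator, plus \((\frac12-t)P_c\), where \(P_c\) is cubic in \(\frac12-t\). Lemma~\ref{lem:cubic-endpoint} is applied to \(-P_e\) in the variable \(\frac{f_3^2}{S^3}\) and to \(P_c\) in the variable \(\frac12-t\). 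The first piece vanishes iff \(f_3=0\) and the second iff \(t=\frac12\), so the equality statement comes for free.

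Your route can also be completed. For every \(t\in[\frac25,\frac12]\) the quartic has negative leading coefficient and positive linear and quadratic coefficients. Replacing \(a_4s^4\) by \(a_4Ls^3\) is exact at \(s=0\) and \(s=L\). The result has either the shape in Lemma~\ref{lem:cubic-endpoint} (whose proof never uses the sign of \(a_0\)) or only nonnegative coefficients. The cost is a two-variable endpoint inequality \(p(L)>0\) on \(n\geq5\), \(\frac25\leq t\leq\frac12\). In the paper's organization, \(p(L)\) is a positive combination of \(-P_e(n,L)\) and \((\frac12-t)P_c(t,n,L)\), each controlled by a one-variable cubic-endpoint argument. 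You also still need an argument in \(\frac12-t\) to show that the constant term, a positive multiple of \((\frac12-t)P_c(t,n,0)\), is positive for \(t<\frac12\).

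For the ``only if'' part, positivity on the edges \(s=0\) and \(t=\frac12\) is not enough by itself. Strict positivity for \(t<\frac12\), \(s>0\) comes from the increase-then-decrease shape of \(p\) in \(s\) at each fixed \(t\), together with \(p(L)>0\) for every \(t\).
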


\begin{proof}
	For fixed \(n\) and \(\frac{f_3^2}{S^3}\), we separate the value of \(\frac{\Theta_0}{S^3}\) at \(t=\frac{1}{2}\) from the remaining terms.
	Substituting \eqref{eq:integral-f-psi} into \eqref{eq:Theta-definition} and factoring \(\frac{1}{2}-t\) from the difference yields
	\begin{equation}\label{eq:Theta-constant}
		\begin{aligned}
			\frac{\Theta_0}{S^3}
			=-\frac{\frac{f_3^2}{S^3}P_e\left(n,\frac{f_3^2}{S^3}\right)}{128n(n+2)\left(4+\frac{nf_3^2}{S^3}\right)^2}
			+\frac{\left(\frac{1}{2}-t\right)P_c\left(t,n,\frac{f_3^2}{S^3}\right)}{192n^2(3-4t)^2\left(4+\frac{nf_3^2}{S^3}\right)^2}.
		\end{aligned}
	\end{equation}
	Here \(P_e\) and \(P_c\) are given by
	\begin{equation}\label{eq:Pe}
		\begin{aligned}
			-P_e\left(n,\frac{f_3^2}{S^3}\right)
			={}&64(n-2)(n+3)
			+\frac{16nf_3^2}{S^3}(3n^2-3n-26)\\
			&+\frac{4f_3^4}{S^6}(2n^4-7n^3-26n^2)
			-\frac{5n^3(n+2)f_3^6}{S^9},
		\end{aligned}
	\end{equation}
	and
	\begin{equation}\label{eq:Pc}
		\begin{aligned}
			P_c\left(t,n,\frac{f_3^2}{S^3}\right)
			={}&A_0+48\left(\frac{1}{2}-t\right)A_1
			-768\left(\frac{1}{2}-t\right)^2
			\left(\frac{f_3^2}{S^3}+\frac{2}{n}\right)A_2\\
			&-4096\left(\frac{1}{2}-t\right)^3
			\left(\frac{nf_3^2}{S^3}+2\right)^2(n+1)(n+3),
		\end{aligned}
	\end{equation}
	where
	\begin{equation}\label{eq:algebraic-A0}
		\begin{aligned}
			A_0={}&\left(\frac{nf_3^2}{S^3}+2\right)^2
			\left[128n^2+84n\left(\frac{nf_3^2}{S^3}+2\right)
			-9\left(\frac{nf_3^2}{S^3}+2\right)^2\right]\\
			&+4\left(\frac{nf_3^2}{S^3}+2\right)^2
			\left[206n+63\left(\frac{nf_3^2}{S^3}+2\right)+336\right]\\
			&+144\left[3\left(\frac{nf_3^2}{S^3}+2\right)(n+3)+2n+17\right],
		\end{aligned}
	\end{equation}
	\begin{equation}\label{eq:algebraic-A1}
		\begin{aligned}
			A_1={}&\left(\frac{nf_3^2}{S^3}+2\right)^2
			\left[16n^2-n\left(\frac{nf_3^2}{S^3}+2\right)
			-3\left(\frac{nf_3^2}{S^3}+2\right)^2\right]\\
			&+\left(\frac{nf_3^2}{S^3}+2\right)^2
			\left[74n-9\left(\frac{nf_3^2}{S^3}+2\right)\right]\\
			&+58\left(\frac{nf_3^2}{S^3}+2\right)^2
			+12\left(\frac{nf_3^2}{S^3}+2\right)(3n-1)+24(n+7),
		\end{aligned}
	\end{equation}
	and
	\begin{equation}\label{eq:algebraic-A2}
		A_2=\left(\frac{nf_3^2}{S^3}+2\right)^2(2n^2+3n)
		+4n\left(\frac{nf_3^2}{S^3}+2\right)(n+1)+12n.
	\end{equation}
	Here \(A_0,A_1,A_2\) are polynomial coefficients.

	By \eqref{eq:Pe}, the polynomial \(s\mapsto -P_e(n,s)\) has positive constant and linear coefficients and negative leading coefficient for \(n\geq5\), since
	\[
		64(n-2)(n+3)>0,\qquad
		16n(3n^2-3n-26)>0,\qquad
		-5n^3(n+2)<0.
	\]
	Lemma~\ref{lem:cubic-endpoint}, with \(L=\frac{n-2}{n}\), therefore yields
	\[
		\min_{0\leq s\leq\frac{n-2}{n}}
		\bigl\{-P_e(n,s)\bigr\}
		=
		\min\left\{
		-P_e(n,0),
		-P_e\left(n,\frac{n-2}{n}\right)
		\right\}.
	\]
	Evaluating \eqref{eq:Pe} at the two endpoints gives \(-P_e(n,0)=64(n-2)(n+3)>0\) and \(-P_e\left(n,\frac{n-2}{n}\right)=(n-2)^2(n+2)(3n+14)>0\).
	Consequently,
	\begin{equation}\label{eq:Pe-sign}
		-P_e\left(n,\frac{f_3^2}{S^3}\right)>0
	\end{equation}
	for the parameter range in \eqref{eq:certificate-domain}.

	The bounds in \eqref{eq:certificate-domain} also yield
	\[
		2\leq\frac{nf_3^2}{S^3}+2\leq n,
		\qquad
		n\leq\frac{n}{2}\left(\frac{nf_3^2}{S^3}+2\right).
	\]
	These bounds imply \(84n\left(\frac{nf_3^2}{S^3}+2\right)-9\left(\frac{nf_3^2}{S^3}+2\right)^2\geq0\), so the first bracket in \eqref{eq:algebraic-A0} is at least \(128n^2\).
	The remaining terms are nonnegative, and hence
	\begin{equation}\label{eq:A0-bound}
		A_0\geq128n^2\left(\frac{nf_3^2}{S^3}+2\right)^2.
	\end{equation}
	The two brackets in \eqref{eq:algebraic-A1} satisfy
	\[
		\begin{aligned}
			16n^2-n\left(\frac{nf_3^2}{S^3}+2\right)
			-3\left(\frac{nf_3^2}{S^3}+2\right)^2&\geq12n^2,\\
			74n-9\left(\frac{nf_3^2}{S^3}+2\right)&\geq65n.
		\end{aligned}
	\]
	The remaining terms in \eqref{eq:algebraic-A1} are positive, so \(A_1>0\).
	Using \(\frac{1}{n}\leq\frac{1}{5}\) and \(\frac{nf_3^2}{S^3}+2\geq2\) in \eqref{eq:algebraic-A2} gives
	\begin{equation}\label{eq:A2-bound}
		\begin{aligned}
			A_2
			\leq\left(\frac{13}{5}+\frac{12}{5}+\frac{3}{5}\right)
			n^2\left(\frac{nf_3^2}{S^3}+2\right)^2
			=\frac{28}{5}n^2\left(\frac{nf_3^2}{S^3}+2\right)^2.
		\end{aligned}
	\end{equation}
	The second and third estimates use \(4n(n+1)\left(\frac{nf_3^2}{S^3}+2\right)\leq2n(n+1)\left(\frac{nf_3^2}{S^3}+2\right)^2\) and \(12n\leq3n\left(\frac{nf_3^2}{S^3}+2\right)^2\), respectively.

	For fixed \(n\) and \(\frac{f_3^2}{S^3}\), the expression for \(P_c\) in \eqref{eq:Pc} is a cubic polynomial in \(\frac{1}{2}-t\), with positive constant and linear coefficients and a negative leading coefficient.
	By Lemma~\ref{lem:cubic-endpoint}, its minimum on \(0\leq\frac{1}{2}-t\leq\frac{1}{10}\) is attained at \(t=\frac{1}{2}\) or \(t=\frac{2}{5}\).
	The value at \(t=\frac{1}{2}\) is \(A_0>0\).
	At \(t=\frac{2}{5}\), the bounds \eqref{eq:A0-bound} and \eqref{eq:A2-bound}, together with \(A_1>0\), yield
	\[
		\begin{aligned}
			P_c\left(\frac{2}{5},n,\frac{f_3^2}{S^3}\right)
			={}&A_0+\frac{24}{5}A_1
			-\frac{192}{25}\left(\frac{f_3^2}{S^3}+\frac{2}{n}\right)A_2
			-\frac{512}{125}\left(\frac{nf_3^2}{S^3}+2\right)^2(n+1)(n+3)\\
			>{}&\left(128-\frac{192}{25}\cdot\frac{28}{5}
			-\frac{512}{125}\cdot\frac{48}{25}\right)
			n^2\left(\frac{nf_3^2}{S^3}+2\right)^2\\
			={}&\frac{241024}{3125}n^2\left(\frac{nf_3^2}{S^3}+2\right)^2>0.
		\end{aligned}
	\]
	Here we use \(\frac{f_3^2}{S^3}+\frac{2}{n}\leq1\), \((n+1)(n+3)\leq\frac{48}{25}n^2\) and \(A_1>0\).
	Thus
	\begin{equation}\label{eq:Pc-sign}
		P_c\left(t,n,\frac{f_3^2}{S^3}\right)>0
	\end{equation}
	throughout \eqref{eq:certificate-domain}.
	By \eqref{eq:Pe-sign} and \eqref{eq:Pc-sign}, both terms in \eqref{eq:Theta-constant} are nonnegative.
	They vanish precisely when \(f_3=0\) and \(t=\frac{1}{2}\), respectively.
	Thus \(\Theta_0\geq0\), with equality if and only if both conditions hold.

	Substituting \eqref{eq:integral-f-psi} into \eqref{eq:Theta-definition} and collecting the coefficient of \(\int_M\psi\) yields
	\begin{equation}\label{eq:Theta-slope}
		\begin{aligned}
			\Theta_1={}&\frac{t}{3-4t}\left(\frac{S}{n}+\frac{f_3^2}{4S^2}\right)
			+\frac{(n+4)S}{4n}-\frac{5f_3^2}{16S^2}
			-\frac{3}{2}\left(\frac{3}{5}+\frac{t^2}{(3-4t)(1-t)}\right)
			\left(\frac{S}{n}+\frac{f_3^2}{4S^2}\right)\\
			&+\frac{(n+4)S(nf_3^2+2S^3)^2}{n(nf_3^2+4S^3)^2}
			-\frac{S(nf_3^2+2S^3)\bigl((n+4)(3-4t)+20t\bigr)}{5n(nf_3^2+4S^3)}.
		\end{aligned}
	\end{equation}
	Dividing \eqref{eq:Theta-slope} by \(S\) and clearing denominators, we find that
	\begin{equation}\label{eq:slope-polynomial}
		80n(1-t)(3-4t)\left(\frac{nf_3^2}{S^3}+4\right)^2\frac{\Theta_1}{S}
	\end{equation}
	is a cubic polynomial in \(\frac{f_3^2}{S^3}\).
	Its constant coefficient is
	\begin{equation}\label{eq:slope-constant-coefficient}
		32\left[
		16n\left(\frac{1}{2}+t\right)(1-t)(3-4t)
		-\left(64t^3+4t^2-20t+12\right)
		\right],
	\end{equation}
	and its linear coefficient is
	\begin{equation}\label{eq:slope-linear-coefficient}
		16\left[
		24n^2\left(\frac{1}{2}+t\right)(1-t)(3-4t)
		-n\left(96t^3+106t^2-205t+93\right)
		\right].
	\end{equation}
	Its leading coefficient is \(-3n^3\left(74t^2-107t+43\right)<0\).
	On the interval \(\frac{2}{5}\leq t\leq\frac{1}{2}\),
	\[
		192t^2+8t-20\geq\frac{348}{25}>0,
		\qquad
		288t^2+212t-205\leq-27<0.
	\]
	Thus \(64t^3+4t^2-20t+12\) is increasing, whereas \(96t^3+106t^2-205t+93\) is decreasing.
	Hence
	\[
		64t^3+4t^2-20t+12\leq11,
		\qquad
		96t^3+106t^2-205t+93\leq\frac{4263}{125}<35.
	\]
	Moreover,
	\[
		2\left(\frac{1}{2}+t\right)(1-t)(3-4t)
		=1+(2t-1)(4t^2-3t-2)\geq1.
	\]
	These estimates bound the brackets in \eqref{eq:slope-constant-coefficient} and \eqref{eq:slope-linear-coefficient} below by \(8n-11>0\) and \(12n^2-35n>0\), respectively.
	Lemma~\ref{lem:cubic-endpoint} therefore reduces the positivity of \eqref{eq:slope-polynomial} to its values at \(\frac{f_3^2}{S^3}=0\) and \(\frac{f_3^2}{S^3}=\frac{n-2}{n}\).
	The value at the first endpoint is positive by \eqref{eq:slope-constant-coefficient}.

	At the second endpoint, substitution into \eqref{eq:Theta-slope} gives
	\begin{equation}\label{eq:slope-endpoint-bound}
		\begin{aligned}
			&\left.
			80n(1-t)(3-4t)\left(\frac{nf_3^2}{S^3}+4\right)^2\frac{\Theta_1}{S}
			\right|_{\frac{f_3^2}{S^3}=\frac{n-2}{n}}\\
			\geq&15n^3-148n
			+\frac{1-2t}{2}\bigl[103n^3+2n(119n-142)\bigr]\\
			&-\frac{(1-2t)^2}{4}\bigl[102n^3+4n(93n+290)\bigr]
			-32(1-2t)^3n^2(n+1).
		\end{aligned}
	\end{equation}
	Indeed, subtracting the right-hand side of \eqref{eq:slope-endpoint-bound} from the left-hand side yields
	\[
		\begin{aligned}
			&3n^3+18n^2+36n+168\\
			&\quad +(1-2t)(9n^3+54n^2+108n+684)\\
			&\quad +(1-2t)^2(6n^3+36n^2+72n+276)
			+64(1-2t)^3n,
		\end{aligned}
	\]
	which is nonnegative.

	For \(n\geq5\), we have
	\[
		\begin{aligned}
			15n^3-148n&>9n^3,\\
			103n^3+2n(119n-142)&\geq103n^3,\\
			102n^3+4n(93n+290)&<223n^3,\\
			32n^2(n+1)&<39n^3.
		\end{aligned}
	\]
	Using these bounds and \(0\leq1-2t\leq\frac{1}{5}\) in \eqref{eq:slope-endpoint-bound} gives
	\[
		\begin{aligned}
			&\left.
			80n(1-t)(3-4t)\left(\frac{nf_3^2}{S^3}+4\right)^2\frac{\Theta_1}{S}
			\right|_{\frac{f_3^2}{S^3}=\frac{n-2}{n}}\\
			>&9n^3
			+\frac{1-2t}{4}n^3
			\left[206-223(1-2t)-156(1-2t)^2\right]\\
			\geq&\left[9+\frac{155}{4}(1-2t)\right]n^3>0.
		\end{aligned}
	\]
	Both endpoint values are positive, so Lemma~\ref{lem:cubic-endpoint} implies that \eqref{eq:slope-polynomial} is positive throughout the parameter range.
	Since \(S>0\) and the remaining factors in \eqref{eq:slope-polynomial} are positive, \(\Theta_1>0\).
\end{proof}

Lemmas~\ref{lem:integral-comparison} and~\ref{lem:algebraic-positivity}, together with Proposition~\ref{prop:hessian-cancellation}, yield the following rigidity statement.

\begin{proposition}\label{prop:closed-interval-rigidity}
	Under the hypotheses of Theorem~\ref{thm:main}, suppose that \(\frac{5}{3}n\leq S\leq2n\).
	Then \(S=2n\), \(f_3=0\), and \(h^3=3h\).
\end{proposition}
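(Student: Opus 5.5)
We combine Lemma~\ref{lem:integral-comparison} with the affine decomposition \eqref{eq:Theta-affine} and the sign information of Lemma~\ref{lem:algebraic-positivity}. Since \(0\geq\Theta=\Theta_0\Vol(M)+\Theta_1\int_M\psi\), with \(\Theta_0\geq0\), \(\Theta_1>0\), \(\psi\geq0\) and \(\Vol(M)>0\), both terms must vanish. Hence \(\Theta_0=0\) and \(\int_M\psi=0\). The equality case of Lemma~\ref{lem:algebraic-positivity} then gives \(t=\frac12\), that is, \(S=2n\), and \(f_3=0\).

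Next we extract pointwise information from \(\int_M\psi=0\). Because \(\psi\geq0\) is continuous, \(\psi\equiv0\) on \(M\). With \(t=\frac12\) and \(f_3=0\), the definition \eqref{eq:psi-definition} gives \(f\equiv\frac{tS}{3-4t}\cdot\frac{S}{n}=\frac{S^2}{2n}=2n\), a constant. Proposition~\ref{prop:hessian-cancellation} now forces the nonnegative integrand in \eqref{eq:sharp-residual-bound} to vanish identically. With \(f_3=0\), this says \(\lambda_i f_{ii}=\frac{f}{S}\lambda_i=\lambda_i\) for every \(i\) at every point. By \eqref{eq:yang-cheng-f} with \(f_3=0\) and \(S/n=2\), we have \(f_{ii}=\lambda_i^2-2\). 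Thus for each \(i\), either \(\lambda_i=0\) or \(\lambda_i^2-2=1\), i.e.\ \(\lambda_i^2=3\). In both cases \(\lambda_i^3=3\lambda_i\), which is the tensor identity \(h^3=3h\).

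As a consistency check, \(f=\sum_i(\lambda_i^2-2)^2=\sum_{\lambda_i\neq0}1+4\#\{\lambda_i=0\}\) should equal \(2n\). Since \(S=3\#\{\lambda_i\neq0\}=2n\), the number of nonzero eigenvalues is \(\frac{2n}{3}\) and the number of zero eigenvalues is \(\frac n3\), so \(f=\frac{2n}{3}+\frac{4n}{3}=2n\). This agrees with the constant value found above, so no contradiction arises.

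The main obstacle is the first step: justifying that the equality case \(\Theta_0=0\) genuinely forces both \(t=\frac12\) and \(f_3=0\). This requires \(P_e\) and \(P_c\) to be strictly signed, which is exactly the content of Lemma~\ref{lem:algebraic-positivity}, so it can be quoted. The only further care is that \(\Vol(M)>0\) and that \(\Theta_1>0\) holds at the specific parameter values, not merely generically. Both are ensured because the lemma covers the whole closed domain \eqref{eq:certificate-domain}.
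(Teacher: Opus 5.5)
Your proof is correct and follows the paper's argument. The comparison \(\Theta\le0\le\Theta_0\Vol(M)+\Theta_1\int_M\psi\) forces \(t=\frac12\), \(f_3=0\) and \(\psi\equiv0\), and then Proposition~\ref{prop:hessian-cancellation} makes the residual vanish. The only cosmetic difference is that you read off \(\lambda_i^3=3\lambda_i\) directly from the componentwise integrand. The paper instead identifies that integrand with \(\sum_{i,j}r_{ij}^2\) and observes that \(r=h^3-3h\) at these parameter values. Your closing consistency check is not needed.
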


\begin{proof}
	The hypotheses imply the parameter bounds in \eqref{eq:certificate-domain}.
	By \eqref{eq:Theta-sign}, \(\Theta\leq0\), whereas Lemma~\ref{lem:algebraic-positivity}, \eqref{eq:psi-range-integral}, and \eqref{eq:Theta-affine} give
	\[
		\Theta=\Theta_0\Vol(M)+\Theta_1\int_M\psi\geq0.
	\]
	Thus equality holds, and \(\Vol(M)>0\) and \(\Theta_1>0\) force \(\Theta_0=0\) and \(\int_M\psi=0\).
	Lemma~\ref{lem:algebraic-positivity} then gives \(t=\frac{1}{2}\) and \(f_3=0\), so \(S=2n\).
	The vanishing of \(\int_M\psi\), together with \(\psi\geq0\) in \eqref{eq:psi-range-integral}, implies \(\psi\equiv0\).
	With \(\psi\equiv0\), identity~\eqref{eq:r-orthogonality} shows that the integrand in \eqref{eq:sharp-residual-bound} is \(\sum\limits_{i,j}r_{ij}^2\).
	Hence \eqref{eq:sharp-residual-bound} yields
	\[
		\int_M\sum_{i,j}r_{ij}^2=0,
	\]
	and therefore \(r_{ij}\equiv0\).
	At \(t=\frac{1}{2}\) and \(f_3=0\), \eqref{eq:k-polynomial} reduces to
	\[
		0=r_{ij}=\sum_{p,q}h_{ip}h_{pq}h_{qj}-3h_{ij}.
	\]
	Thus \(h^3=3h\).
\end{proof}

\section{Proof of Theorem~\ref{thm:main}}
\label{sec:proof-main}

Yang--Cheng's estimate and Proposition~\ref{prop:closed-interval-rigidity} exclude the interval \(n<S<2n\).
At \(S=2n\), the identity \(h^3=3h\) and minimality determine the principal curvatures and their multiplicities, so Cartan's classification identifies the image.
The simple connectivity of the resulting Cartan hypersurface then allows us to conclude that the immersion is an embedding.

\begin{proof}
	Assume that \(S>n\).
	If \(S<2n\), then \cite[Theorem~2]{yang_cheng_1998} gives \(S\geq\frac{5}{3}n\).
	Proposition~\ref{prop:closed-interval-rigidity} then implies \(S=2n\), a contradiction.
	Therefore \(S\geq2n\).

	Suppose now that \(S=2n\).
	For the curvature calculations, we pass to a connected component \(\widehat M\) of the two-sided cover of \(M\) and fix a unit normal.
	Proposition~\ref{prop:closed-interval-rigidity} gives \(h(h^2-3I)=0\), so every principal curvature belongs to \(\{-\sqrt3,0,\sqrt3\}\).
	Let \(m_-\), \(m_0\), and \(m_+\) denote the corresponding multiplicities at a point.
	Minimality, the identity \(S=2n\), and the dimension relation yield
	\[
		\sqrt3(m_+-m_-)=0,\qquad
		3(m_++m_-)=2n,\qquad
		m_-+m_0+m_+=n.
	\]
	These equations give \(m_-=m_0=m_+=\frac{n}{3}\).
	The three principal curvatures therefore occur at every point with constant multiplicities, so the lifted immersion of \(\widehat M\) is isoparametric with \(g=3\).

	Cartan's classification gives \(m\in\{1,2,4,8\}\) for the common multiplicity.
	Up to a fixed ambient isometry, the image of the connected lifted immersion is an open subset of a tube of constant radius about the standard embedding of \(\mathbb F P^2\), where \(\mathbb F=\mathbb R,\mathbb C,\mathbb H,\mathbb O\) corresponds to \(m=1,2,4,8\), respectively \cite[pp.~359--367]{cartan_isoparametric_1939}; see also \cite{cecil_ryan_2015}.
	Since \(m=\frac{n}{3}\) and \(n\geq5\), only \(m=2,4,8\) are possible, giving \(n=6,12,24\), respectively.
	In each family, the principal curvatures \(\sqrt3,0,-\sqrt3\) determine the minimal member.

	Let \(C_m\) denote this connected minimal Cartan hypersurface.
	The image of \(\widehat M\) is open in \(C_m\) and is closed by compactness.
	Since \(C_m\) is connected, this image is all of \(C_m\).
	The projection \(\widehat M\to M\) is surjective, so the original immersion also has image \(C_m\).
	As a map \(M\to C_m\), this immersion is a proper local diffeomorphism and hence a finite covering.
	For \(m=2,4,8\), \(C_m\) is diffeomorphic to the unit normal sphere bundle of \(\mathbb F P^2\).
	Under this identification, the focal map is the projection in the sphere bundle
	\[
		\mathbb S^m(1)\longrightarrow C_m\longrightarrow\mathbb F P^2,
		\qquad \mathbb F=\mathbb C,\mathbb H,\mathbb O,
	\]
	respectively \cite{cecil_ryan_2015}.
	Both the fiber and the base are simply connected, so the homotopy exact sequence of this bundle yields \(\pi_1(C_m)=0\).
	The connected covering \(M\to C_m\) therefore has one sheet and is a diffeomorphism, which proves that the original immersion is an embedding.
	Normal reversal interchanges \(\sqrt3\) and \(-\sqrt3\) and fixes \(0\), so the unordered spectrum is independent of the chosen normal.

	Conversely, if the image is congruent to a minimal Cartan hypersurface with common multiplicity \(m\in\{2,4,8\}\), then \(n=3m\) and the principal curvatures are \(\sqrt3,0,-\sqrt3\), each of multiplicity \(m\).
	Hence
	\[
		S=m(3+0+3)=6m=2n,\qquad
		f_3=m(3\sqrt3+0-3\sqrt3)=0.
	\]
	This proves the converse and completes the proof.
\end{proof}

\paragraph{AI Disclosure.}
ChatGPT was used only to assist in verifying some algebraic calculations and was consulted on the choice of certain coefficients.
The authors independently developed and verified all mathematical arguments and the resulting proofs, prepared the final manuscript, and take full responsibility for its content.

\end{document}